\newtheorem{theorem}{Theorem}[section]
\newtheorem{proposition}{Proposition}[section]
\newtheorem{lemma}{Lemma}[section]
\newtheorem{corollary}{Corollary}[section]
\newtheorem{example}{Example}[section]
\newtheorem{assumption}{Assumption}[section]
\newtheorem{definition}{Definition}[section]
\newcommand{\R}{\ensuremath{\mathbf{R}}}
\newcommand{\Nn}{\ensuremath{\mathbf{N}}}
\newcommand{\E}{\ensuremath{\mathds{E}}}
\newcommand{\one}{\ensuremath{\mathds{1}}}
\newcommand{\dx}{\ensuremath{\mathrm{d}}}
\newcommand{\Var}{\ensuremath{\mathrm{Var}}}
\newcommand{\Cov}{\ensuremath{\mathrm{Cov}}}
\begin{document}

\title{Is Completeness Necessary? Estimation in Nonidentified Linear Models\footnote{\scriptsize We are grateful to the { Editor, Co-Editor, and} three anonymous referees for helpful comments. We are also grateful to the participants of the Duke workshop, TSE Econometrics seminar, Triangle Econometrics Conference, 4th ISNPS Conference, 2018 NASMES Conference, and Bristol Econometric Study Group. Jean-Pierre Florens acknowledges funding from the French National Research Agency (ANR) under the Investments for the Future program (Investissement d'Avenir, grant ANR-17-EURE-0010). All remaining errors are ours.}}
\author{
	Andrii Babii\footnote{University of North Carolina at Chapel Hill - Gardner Hall, CB 3305
		Chapel Hill, NC 27599-3305. Email: \href{mailto:babii.andrii@gmail.com}{babii.andrii@gmail.com}.} \\
	\textit{\normalsize UNC Chapel Hill}
	\and
	Jean-Pierre Florens\footnote{Toulouse School of Economics -- 1, Esplanade de l'Universit\'{e}, 31080 Toulouse, France. Email: \href{mailto:jean-pierre.florens@tse-fr.eu}{jean-pierre.florens@tse-fr.eu}} \\
	\textit{\normalsize Toulouse School of Economics}
}
\maketitle

\begin{abstract}
	{\footnotesize Modern data analysis depends increasingly on estimating models via flexible high-dimensional or nonparametric machine learning methods, where the identification of structural parameters is often challenging and untestable. In linear settings, this identification hinges on the completeness condition, which requires the nonsingularity of a high-dimensional matrix or operator and may fail for finite samples or even at the population level. Regularized estimators provide a solution by enabling consistent estimation of structural or average structural functions, sometimes even under identification failure. We show that the asymptotic distribution in these cases can be nonstandard. We develop a comprehensive theory of regularized estimators, which include methods such as high-dimensional ridge regularization, gradient descent, and principal component analysis (PCA). The results are illustrated for high-dimensional and nonparametric instrumental variable regressions and are supported through simulation experiments.}
\end{abstract}

\noindent {\footnotesize \textbf{Keywords}: machine learning, high-dimensional regressions, ridge, gradient descent, PCA, nonparametric IV, nonidentified models, weak identification. \\
	\noindent \textbf{JEL Classifications}: C14, C26}

\thispagestyle{empty}


\setcounter{page}{1} 

\section{Introduction}
Structural nonparametric and high-dimensional models are often ill-posed. Notable examples include nonparametric instrumental variable (IV) regression, various high-dimensional regressions, measurement error models, and random coefficient models. These cases commonly lead to an ill-posed functional equation
\begin{equation*}
	K\varphi = r,
\end{equation*}
where $\varphi$ is the structural parameter of interest, $r$ is a known function, and $K$ is a linear operator. The classical literature on numerical ill-posed inverse problems (see \cite{engl1996regularization}) addresses \textit{deterministic} problems, where $K$ is known and where $r$ is observed with a deterministic numerical error. However, in econometric applications, both $K$ and $r$ must be estimated from data, posing a statistically ill-posed inverse problem.

Identification has long been a central concept in econometrics, as demonstrated by foundational works such as \cite{koopmans1949identification}, \cite{koopmans1950identification}, and \cite{rothenberg1971identification} for parametric models. In nonparametric and high-dimensional settings, $K$ and $r$ are typically derived from the data-generating process, with $\varphi$ uniquely identified if $K\varphi = r$ has a unique solution. Uniqueness is achieved when $K$ is a one-to-one (or nonsingular) operator, meaning that $K\phi = 0 \implies \phi = 0$ for all $\phi$ in the domain of $K$. However, in econometric applications, $K$ is often unknown, and the estimated operator $\hat{K}$, being finite-rank, lacks one-to-one properties in finite samples.

In cases of nonidentification, the maximum likelihood estimator can yield a flat likelihood in some parameter space regions, leading to an ambiguously defined maximum. It is thus important to characterize the asymptotic behavior of estimators in potentially nonidentified models. In ill-posed models without identification, the identified set is a linear manifold, $\phi + \mathcal{N}(K)$, where $\phi$ solves $K\phi = r$ and where $\mathcal{N}(K)$ is the null space of $K$. This identified set, which is generally unbounded, lacks informativeness on $\varphi$ without additional constraints.

Given that $K$ or $\hat{K}$ may fail to be one-to-one and often has a discontinuous generalized inverse, regularization is essential for estimating the structural parameter consistently. In this paper, we focus on spectral regularization methods that modify the spectrum of $\hat{K}$. A prominent example is Tikhonov regularization (functional ridge regression); see \cite{tikhonov1963solution}. Other methods include iterated Tikhonov, spectral cutoff (or principal component analysis, see \cite{yao2005functional}), and Landweber iteration (or functional gradient descent).\footnote{Functional gradient descent is also foundational in boosting estimators; see \cite{friedman2001greedy}.} We show that estimators based on spectral regularization are uniformly consistent for the best approximation of the structural parameter in the orthogonal complement of the null space of $K$, and we explore the distributional implications of identification failures.

In some cases, the best approximation may coincide with, or closely approximate, the structural parameter even if $K$ is singular and has a nontrivial null space. This provides an insightful interpretation of nonparametric IV regression under identification failure, akin to the least-squares best approximation under misspecification; see \cite{angrist2008mostly}, Chapter 3.\footnote{In contrast, the parametric 2SLS estimator lacks this best approximation property; see \cite{escanciano2018optimal} for an example of an IV estimator with this interpretation.} Moreover, the best approximation can also be used to construct set estimators in partial identification frameworks.

\paragraph{Contribution and related literature.} Our paper contributes to several research areas. First, it advances the literature on regularization in statistical inverse problems, as discussed in \cite{newey2003instrumental}, \cite{blundell2007semi}, \cite{chen2012estimation}, \cite{chen2018optimal}, \cite{carrasco2007linear,carrasco2013asymptotic}, \cite{darolles2011nonparametric}, \cite{florens2011identification}, \cite{gagliardini2012tikhonov}, \cite{hall2005nonparametric}, \cite{babii2020honest,babii2021high}, and other studies. \cite{canay2013testability} demonstrate that the completeness condition is untestable in these models; see also \cite{d2010completeness}, \cite{hu2017nonparametric}, and \cite{andrews2017examples} for more on this condition. Our contribution is the derivation of convergence rates in Hilbert space and $L_\infty$ norms for estimators based on generic spectral regularization schemes, including Tikhonov, iterated Tikhonov, Landweber iteration, and spectral cutoff. These results, which do not require the completeness condition, are illustrated for nonparametric IV and high-dimensional regressions.\footnote{This generalizes \cite{florens2011identification}, which covers only $L_2$ rates for Tikhonov regularization under certain high-level conditions.}

Second, we add to the literature on identifying linear functionals of a structural function $\varphi$. For instance, \cite{severini2006some,severini2012efficiency} show that linear functionals of $\varphi$ can be identified under conditions weaker than the completeness condition and derive the corresponding efficiency bounds when $\sqrt{n}$ estimation is possible. \cite{santos2011instrumental} establishes the asymptotic normality of linear functionals of nonparametric IV regression without the completeness condition, whereas \cite{escanciano2018optimal} provides necessary and sufficient conditions for the identification and $\sqrt{n}$-estimability of the best linear approximation to the nonparametric IV structural function. We contribute by showing that the asymptotic distribution of linear functionals of $\varphi$ may include a chi-square component when the completeness condition fails and the functional is estimable at a rate slower than $\sqrt{n}$.

Third, our work connects to the partial identification literature in nonparametric IV regression; see \cite{santos2012inference}, \cite{freyberger2017completeness}, and \cite{chernozhukov2023constrained}. Our findings on consistency for the best approximation in the $L_\infty$ norm, alongside \cite{freyberger2017completeness}, have applications in partial identification.

The remainder of this paper is structured as follows. Section~\ref{sec:geometry} introduces the concept of identification in linear ill-posed models. Section~\ref{sec:rates} derives nonasymptotic risk bounds in Hilbert space and $L_\infty$ norms for Tikhonov-regularized estimators, generalizing these results to other spectral regularization schemes in Appendix Section~\ref{sec:general}. Section~\ref{sec:flir} illustrates the transition between Gaussian and weighted chi-square asymptotics under identification failure. Section~\ref{sec:mc} presents a Monte Carlo study, while Section~\ref{sec:conclusion} concludes the paper. All proofs are presented in Appendix~\ref{sec:pi}. Appendix~\ref{sec:extreme} discusses partial identification, and Appendix~\ref{sec:extreme} examines Tikhonov-regularized estimators under extreme identification failures. Finally, Online Supplementary Material, Sections~\ref{app:gi} and \ref{app:wiener} review relevant results from generalized inverse theory and Hilbert space-valued U-statistics.

\section{Identification}\label{sec:geometry}
Consider the functional linear equation
\begin{equation*}
	K\varphi = r,
\end{equation*}
where $K:\mathcal{E}\to\mathcal{H}$ is a compact linear operator between Hilbert spaces $(\mathcal{E},\langle.,.\rangle_{\mathcal{E}})$ and $(\mathcal{H},\langle.,.\rangle_{\mathcal{H}})$ and where $\varphi\in\mathcal{E}$ represents the structural parameter of interest. For $\varphi$ to be point identified, $K$ must be one-to-one (or nonsingular), meaning that the null space of $K$, denoted by $\mathcal{N}(K) = \left\{\phi\in\mathcal{E}:\; K\phi = 0\right\}$, is reduced to $\{0\}$. Thus, for any $\phi\in\mathcal{E}$,
\begin{equation*}
	K\phi = 0\implies \phi = 0.
\end{equation*}

When the completeness condition fails, the identified set becomes a closed linear manifold, which is denoted by $I_0 = \varphi + \mathcal{N}(K)$. Since $\mathcal{N}(K)$ forms a closed linear subspace in $\mathcal{E}$, we can decompose $\varphi = \varphi_1 + \varphi_0$, where $\varphi_0$ is the unique orthogonal projection of $\varphi$ on $\mathcal{N}(K)$ and where $\varphi_1$ is the projection of $\varphi$ on its orthogonal complement, denoted by $\mathcal{N}(K)^\perp$. We let $K^*:\mathcal{H}\to\mathcal{E}$ denote the Hilbert adjoint operator of $K:\mathcal{E}\to\mathcal{H}$, defined by the relation\footnote{To reduce visual clutter, we omit subscripts and use $\langle .,.\rangle$ and $\|.\|$ to denote the inner products and norms in Hilbert spaces. For example, it is clear that the inner product on the left side is $\langle.,.\rangle_\mathcal{H}$, whereas the inner product on the right side is $\langle.,.\rangle_{\mathcal{E}}$.}
\begin{equation*}
	\langle \psi,K\phi\rangle = \langle K^*\psi,\phi\rangle,\qquad \forall (\psi,\phi)\in\mathcal{H}\times\mathcal{E}.
\end{equation*}

Since $\mathcal{N}(K)^\perp=\overline{\mathcal{R}(K^*)}$ (see \cite{luenberger1997optimization}, p.157), the best approximation $\varphi_1$ equals the structural parameter $\varphi$ whenever $\varphi\in \overline{\mathcal{R}(K^*)}$, which is the closure of the range of the adjoint operator $K$. Note that $\mathcal{R}(K^*) = \mathcal{R}(K^*K)^{1/2}$, as shown in \cite{engl1996regularization}, Proposition 2.18. Therefore, the identification of $\varphi$ can be related to a regularity condition known as the source condition, where $\varphi\in\mathcal{R}(K^*K)^{\beta/2}$ with $\beta\geq 1$; see \cite{carrasco2007linear}. In other words, additional regularity of $\varphi$ can compensate for the failure of the completeness condition.\footnote{A similar observation holds for high-dimensional sparse regressions, where sparse coefficients can be identified under the restricted eigenvalue condition; see \cite{tsybakov2009simultaneous}.} The following two examples further illustrate this point; see also \cite{bontemps2024functional} for further applications to ecological inference.

\begin{example}[Nonparametric IV regression]\label{ex:id}
	Consider
	\begin{equation*}
		Y = \varphi(Z) + U,\qquad \E[U|W] = 0,
	\end{equation*}
	where $(Y,Z,W)\in\R\times\R^p\times\R^q$. The exclusion restriction leads to
	\begin{equation*}
		r(w) \triangleq \E[Y|W=w] = \E[\varphi(Z)|W=w] \triangleq (K\varphi)(w),
	\end{equation*}
	where $K:L_2(Z)\to L_2(W)$ is a conditional expectation operator, and for a random vector $X$, we define $L_2(X)=\{\phi:\E|\phi(X)|^2<\infty \}$. If $K$ is compact, then according to the spectral theorem, there {exists} a sequence $(\sigma_j,e_j,h_j)_{j\geq 1}$, where $\sigma_j\to0$ are singular values, $(e_j)_{j\geq1}$ is a complete orthonormal system of $\mathcal{N}(K)^\perp=\overline{\mathcal{R}(K^*)}$, and $(h_j)_{j\geq1}$ is the complete orthonormal system of $\mathcal{N}^\perp(K^*)=\overline{\mathcal{R}(K)}$; see \cite{kress2013linear}, Theorem 15.16. $\varphi_1$ is the same as $\varphi$ whenever it can be represented in terms of $(e_j)_{j\geq 1}$.
\end{example}

\begin{example}[Nonparametric IV with a discrete instrumental variable]
	Following the previous example, we suppose that the instrumental variable is discrete, namely, $W\in\{w_k:\; k\geq 1\}$. Substituting $f_k(z)=f_{Z|W=w_k}(z)$ for every $k\geq 1$ yields
	\begin{equation*}
		\mathcal{N}(K) = \left\{\phi\in L_2(Z):\: \langle \phi,f_k\rangle = 0,\;\forall k\geq 1 \right\},
	\end{equation*}
	where $\langle \phi,f_k\rangle = \E[\phi(Z)f_k(Z)]$. If $\varphi\in \mathrm{span}\{f_k:\;k\geq 1\}$, then $\varphi\in\mathcal{N}(K)^\perp$. $\varphi_1$ matches $\varphi$ whenever it can be expressed in terms of $(f_k)_{k\geq 1}$.
\end{example}

\section{Nonasymptotic Risk Bounds}\label{sec:rates}
In this section, we derive risk bounds for the Tikhonov estimator, which solves
\begin{equation*}
	\min_{\phi} \left\|\hat K\phi - \hat r\right\|^2 + \alpha_n\|\phi\|^2,
\end{equation*}
where $\alpha_n > 0$ is a regularization parameter. The solution to this problem is 
\begin{equation*}
	\hat{\varphi} = (\alpha_n I + \hat K^*\hat K)^{-1}\hat K^*\hat r,
\end{equation*}
which is straightforward to verify.
Note that the estimator $\hat{\varphi}$ is well defined even if $K$ or $\hat{K}$ is not one-to-one.

\subsection{Hilbert Space Risk}
We first describe the relevant class of structural functions and operators:
\begin{assumption}\label{as:source4}
	The structural parameter $\varphi=\varphi_1+\varphi_0$ and the operator $K$ belong to
	\begin{equation*}
		\mathcal{F} = \mathcal{F}(\beta,C)=\left\{(\varphi,K):\; \varphi_1=(K^*K)^{\beta/2}\psi,\;\|\psi\|^2\vee\|\varphi_0\|\vee\|K\|\leq C\right\}
	\end{equation*}
	for some $\beta,C>0$, where $\|K\|=\sup_{\|\phi\|\leq 1}\|K\phi\|$ and $a\vee b=\max\{a,b\}$.
\end{assumption}
To illustrate this assumption, we let $(\sigma_j,e_j,h_j)_{j\geq 1}$ be the spectral decomposition of $K:\mathcal{E}\to\mathcal{H}$; see \cite{kress2013linear}, Theorem 15.16. Then, $\varphi_1 = \sum_{j\geq 1}\langle \varphi_1,e_j\rangle e_j$, and by Parseval's identity, since $\psi = (K^*K)^{-\beta/2}\varphi_1$, we have
\begin{equation*}
	\|\psi\|^2 = \sum_{j\geq 1}\frac{|\langle\varphi_1, e_j\rangle|^2}{\sigma_j^{2\beta}}.
\end{equation*}
Assumption~\ref{as:source4} restricts the relative rates of decline of singular values $(\sigma_j)_{j\geq 1}$, which describe the ill-posedness, and Fourier coefficients $\langle\varphi_1,e_j\rangle_{j\geq 1}$, which describe the regularity of $\varphi_1$.

We estimate $(r,K)$ with $(\hat r,\hat K)$ and impose the following assumption:
\begin{assumption}\label{as:rates4}
	(i) $\E\|\hat r - \hat K\varphi_1\|^2 \leq C_1\delta_{n}$ and (ii) $\E\|\hat K - K\|^2 \leq C_2\rho_{1n},$ where the constants $C_1,C_2<\infty$ do not depend on $(\varphi,K)$ and $\delta_n,\rho_{1n}\to 0$ as $n\to\infty$.
\end{assumption}
Assumption~\ref{as:rates4} restricts the convergence rate of the residuals and the estimated operator. Note that \cite{florens2011identification}, Theorem 2.2, imposes a stronger high-level condition: 
\begin{equation*}
	\E\|\hat K - K\|^4 \leq C\left(\E\|\hat K - K\|^2\right)^2
\end{equation*}
and certain assumptions on the tuning parameters that have not been verified for specific models. 

The following result applies to the Hilbert space norm, $\|.\|$: 
\begin{theorem}\label{thm:L2_rate}
	Suppose that Assumptions~\ref{as:source4} and \ref{as:rates4} hold. Then, for all $\beta\in(0,2]$,
	\begin{equation*}
		\sup_{(\varphi,K)\in\mathcal{F}}\E\left\|\hat{\varphi} - \varphi_1\right\|^2 = O\left(\frac{\delta_{n} + \rho_{1n}\alpha_n^{\beta\wedge 1}}{\alpha_n} + \alpha^{\beta}_n\right),
	\end{equation*}
	where $a\wedge b = \min\{a,b\}$.
\end{theorem}
Theorem~\ref{thm:L2_rate} shows that the convergence rate in the Hilbert space norm is driven by the rates of 
\begin{itemize}
	\item the residuals, $O\left(\delta_n/\alpha_n\right)$;
	\item the estimated operator, $O\left(\rho_{1n}\alpha_n^{\beta\wedge 1}/\alpha_n\right)$;
	\item the regularization bias, $O\left(\alpha_n^{\beta}\right)$.
\end{itemize}
Notably, a stronger version of Assumption~\ref{as:source4} is commonly assumed: $\varphi\in\mathcal{R}(K^*K)^{\beta/2}$ with $\beta\geq 1$; see \cite{carrasco2007linear}. In this case, we actually have $\varphi_0=0$, and the Tikhonov regularized estimator can consistently estimate the structural parameter $\varphi$, despite the failure of the completeness condition. More generally, we distinguish the following possibilities:
\begin{itemize}
	\item the identified case: $\varphi_0=0$ and $\rho_{1n}\alpha_n^{\beta\wedge 1}\lesssim \delta_{n}$, where the rate of convergence to $\varphi$ is driven by the residuals;
	\item the weakly identified case: $\varphi_0=0$ and $\delta_{n}\lesssim \rho_{1n}\alpha_n^{\beta\wedge 1}$, where the rate of convergence to $\varphi$ is driven by the estimated operator;
	\item the nonidentified case: $\varphi_0\ne 0$ and $\rho_{1n}\alpha_n^{\beta\wedge 1}\lesssim \delta_{n}$, where the rate of convergence to $\varphi_1$ is driven by the residuals;
	\item strongly nonidentified models: $\varphi_0\ne 0$ and $\delta_{n}\lesssim \rho_{1n}\alpha_n^{\beta\wedge 1}$, where the rate of convergence to $\varphi_1$ is driven by the estimated operator.
\end{itemize}

In the identified case, the optimal choice of the regularization parameter is $\alpha_n \sim \delta_n^{1/(\beta+1)}$, which yields a convergence rate of $O\left(\delta_n^{\beta/(\beta+1)}\right)$ for $\beta \in (0,2]$. For $\beta > 2$, we demonstrate in Appendix Section~\ref{sec:general} that the optimal rate can be achieved with additional Tikhonov iterations or alternative regularization schemes.

Finally, it is worth mentioning that in the special case of $L_2$ spaces, Theorem~\ref{thm:L2_rate} provides a sharper result than does \cite{florens2011identification}, Theorem 2.2, where the rate is always driven by $\rho_{1n}$.

\subsection{$L_\infty$ Risk}
We consider the space of continuous functions on a compact set $D\subset\R^p$, denoted by $(C(D),\|.\|_\infty)$, where $\|.\|_\infty$ is the uniform norm. We suppose that this space is embedded into $\mathcal{E}$ and that $\mathcal{R}(K^*)\subset C(D)$ and that $\varphi_1\in C(D)$. The mixed operator norm of $K^*$ is defined as $\|K^*\|_{2,\infty}=\sup_{\|\phi\|\leq 1}\|K^*\phi\|_\infty$. The following assumption captures the approximation quality of the operator $K^*$ by its estimate $\hat K^*$ in the $\|.\|_{2,\infty}$ norm.

\begin{assumption}\label{as:rates_2-infty}
	We suppose that $\|K^*\|_{2,\infty}\leq C_3$ and $\E\|\hat K^* - K^*\|_{2,\infty}^2 \leq C_3 \rho_{2n}$, where $C_3<\infty$ is independent of $(\varphi,K)$ and $\rho_{2n}\to 0$.
\end{assumption}

The following result establishes a uniform convergence rate:
\begin{theorem}\label{thm:Linf_rate}
	Under Assumptions~\ref{as:source4}, \ref{as:rates4}, and \ref{as:rates_2-infty}, for all $\beta\in(0,2]$,
	\begin{equation*}
		\sup_{(\varphi,K)\in\mathcal{F}}\E\left\|\hat{\varphi} - \varphi_1\right\|_\infty = O\left(\frac{\delta_{n}^{1/2} + \rho_{1n}^{1/2}\alpha_n^{\frac{\beta+1}{2}\wedge 1} + \rho_{2n}^{1/2}\alpha_n^{1/2}}{\alpha_n} + \alpha_n^{\beta/2}\right).
	\end{equation*}
\end{theorem}
Theorem~\ref{thm:Linf_rate} provides a uniform convergence rate for generic ill-posed inverse problems. Unlike Theorem~\ref{thm:L2_rate}, the $L_\infty$ convergence rate is affected by the estimation error in $K^*$, assessed in the $\|.\|_{2,\infty}$ norm. For further details on the $L_\infty$ convergence rates of more general regularized estimators, see Appendix Section~\ref{sec:general}.

\subsection{Applications}
\subsubsection{High-Dimensional Regressions}\label{sec:flir_example}
We consider the model
\begin{equation*}
	Y = \langle Z,\varphi\rangle + U,\qquad \E[UW]=0,
\end{equation*}
where $(Y,Z,W)\in\R\times\mathcal{E}\times\mathcal{H}$; see \cite{florens2015instrumental}.\footnote{For example, when $W=Z$ and $\mathcal{E}=L_2$ with a counting measure, this yields a high-dimensional regression model for nonsparse data $$Y = \sum_{j\geq 1}\varphi_jZ_j + U,\qquad \E[UZ_j]=0,\qquad \forall j\geq 1.$$ When $\mathcal{E}=L_2$ with the Lebesgue measure, this model is also referred to as functional regression; see \cite{florens2015instrumental} and \cite{babii2021high}.} The exclusion restriction leads to the functional linear equation
\begin{equation*}
	r \triangleq \E[YW] = \E[W\otimes Z]\varphi \triangleq K\varphi,
\end{equation*}
where $K:\mathcal{E}\to\mathcal{H}$ is a covariance operator and where $\otimes$ denotes the tensor product. An econometrician observes an i.i.d. sample $(Y_i,Z_i,W_i)_{i=1}^n$.\footnote{This assumption can be relaxed to covariance stationarity and absolute summability of autocovariances; see \cite{babii2021high}.} Then,
\begin{equation*}
	r = \E[YW],\qquad K = \E[W\otimes Z],\qquad K^*=\E[Z\otimes W]
\end{equation*}
are estimated as
\begin{equation*}
	\hat r = \frac{1}{n}\sum_{i=1}^nY_iW_i,\qquad \hat K = \frac{1}{n}\sum_{i=1}^nW_i\otimes Z_i,\qquad \hat K^* = \frac{1}{n}\sum_{i=1}^nZ_i\otimes W_i.
\end{equation*}

It can be shown that 
\begin{equation*}\small
	\begin{aligned}
		\E\left\|\hat r - \hat K\varphi_1\right\|^2 = \frac{\E\|(Y-\langle Z,\varphi_1\rangle)W\|^2}{n}\qquad\text{and}\qquad \E\left\|\hat K - K\right\|^2 \leq \frac{\E\|ZW\|^2}{n}.
	\end{aligned}
\end{equation*}

We let $\mathcal{F}(\beta,C)$ represent the class of models satisfying Assumption~\ref{as:source4} and assume that $\E\|UW\|^2\vee \E\|ZW\|^2\leq C$ for all the models in this class. Then, $\delta_{n}=\rho_{1n}=n^{-1}$, and by Theorem~\ref{thm:L2_rate}, the Hilbert space risk is of order
\begin{equation*}
	\sup_{(\varphi,K)\in\mathcal{F}}\E\left\|\hat{\varphi} - \varphi_1\right\|^2 = O\left(\frac{1}{\alpha_nn} + \alpha_n^{\beta}\right).
\end{equation*}
For high-dimensional regression, the model is either identified ($\varphi_0=0$) or nonidentified ($\varphi_0\ne 0$). The conditions $\alpha_n\to 0$ and $\alpha_nn\to\infty$ as $n\to\infty$ are sufficient for consistency in the Hilbert space norm. The optimal choice $\alpha_n\sim n^{-1/(\beta+1)}$ yields a convergence rate of order $O(n^{-\beta/(\beta+1)})$, which is optimal when $W=Z$; see \cite{babii2024functional}, Theorem 2.2.

For uniform convergence, we suppose that $\mathcal{E}=L_2(D)$, where $D\subset\R^p$ is bounded, and that the functions in $L_2(D)$ are square integrable with respect to the Lebesgue measure. To satisfy Assumption~\ref{as:rates_2-infty}, we also assume that the models in $\mathcal{F}(\beta,C)$ are such that $\|Z\|_\infty\vee \|W\|_\infty\leq C<\infty$ and that stochastic processes $Z$ and $W$ belong to a H\"{o}lder ball with smoothness $s>d/2$. Using the Hoffman--J\o rgensen and moment inequalities (see \cite{gine2015mathematical}, p. 129 and p. 202),
\begin{equation*}
	\E\left\|\hat K^* - K^*\right\|_{2,\infty}^2 = O\left(\frac{1}{n}\right).
\end{equation*}
Therefore, Assumption~\ref{as:rates_2-infty} holds with $\rho_{2n}=n^{-1}$, and by Theorem~\ref{thm:Linf_rate},
\begin{equation*}
	\sup_{(\varphi,K)\in\mathcal{F}}\E\left\|\hat \varphi - \varphi_1\right\|_\infty = O\left(\frac{1}{\alpha_n n^{1/2}} + \alpha_n^{\beta/2}\right).
\end{equation*}
The conditions $\alpha_n\to 0$ and $\alpha_n n^{1/2}\to \infty$ as $n\to\infty$ ensure the uniform consistency of $\hat\varphi$. The optimal choice $\alpha_n \sim n^{-1/(\beta+2)}$ leads to a convergence rate of order $O(n^{-\beta/{(2\beta+4)}})$. Determining the minimax-optimal convergence rate for the class $\mathcal{F}(\beta,C)$ remains an open problem.

\subsubsection{Nonparametric IV Regression}
Following Example~\ref{ex:id}, we can reformulate the model as 
\begin{equation*}
	r(w) \triangleq \E[Y|W=w]f_W(w) = \int \varphi(z)f_{ZW}(z,w)\dx z \triangleq (K\varphi)(w),
\end{equation*}
where $K:L_2([0,1]^p)\to L_2([0,1]^q)$. To estimate $r$ and $K$, we use kernel smoothing:
\begin{equation*}
	\begin{aligned}
		\hat r(w) & = \frac{1}{nh_n^q}\sum_{i=1}^nY_iK_w\left(h_n^{-1}(W_i-w)\right), \\
		(\hat K\phi)(w) & = \int \phi(z)\hat f_{ZW}(z,w)\dx z, \\
		\hat f_{ZW}(z,w) & = \frac{1}{nh_n^{p+q}}\sum_{i=1}^nK_z\left(h_n^{-1}(Z_i-z)\right)K_w\left(h_n^{-1}(W_i-w)\right),
	\end{aligned}
\end{equation*}
where $K_w$ and $K_z$ are symmetric kernel functions and where $h_n\to 0$ is a bandwidth parameter. Under mild assumptions, Proposition~\ref{prop:npiv_components} shows that $\delta_{n} = \frac{1}{nh^q_n} + h^{2s}_n$ and $\rho_{1n} = \frac{1}{nh^{p+q}_n} + h^{2s}_n$, where $s$ denotes the H\"{o}lder smoothness of $f_{ZW}$. Thus, by Theorem~\ref{thm:L2_rate}, the mean-integrated squared error has the following rate:
\begin{equation*}
	\sup_{(\varphi,K)\in\mathcal{F}}\E\left\|\hat{\varphi} - \varphi_1\right\|^2 = O\left(\frac{1}{\alpha_n}\left(\frac{1}{nh_n^{q}} + h_n^{2s}\right) + \frac{1}{nh_n^{p+q}}\alpha_n^{(\beta-1)\wedge 0} + \alpha_n^{\beta}\right),
\end{equation*}
where the class $\mathcal{F}(\beta,C)$ includes additional moment restrictions; see \cite{babii2020honest}. In the nonparametric IV model, all four identification cases are possible, depending on the regularity parameter $\beta$. For consistency of $\hat\varphi$, we require $\alpha_nnh_n^{q}\to\infty$, $\alpha_n^{(1-\beta)\wedge 0}nh_n^{p+q}\to\infty$, and $h_n^{2s}/\alpha_n\to 0$ as $n\to \infty,\alpha_n\to0$, and $h_n\to0$.

Moreover, it is known that $\rho_{2n}^{1/2} = \sqrt{\frac{\log h_n^{-1}}{nh^{p+q}_n}} + h^s_n$; see, for instance, \cite{babii2020honest}, Proposition A.3.1. Thus, by Theorem~\ref{thm:Linf_rate}, we find
\begin{equation*}
	\sup_{(\varphi,K)\in\mathcal{F}}\E\|\hat \varphi - \varphi_1\|_\infty = O\left(\frac{1}{\alpha_n}\left(\frac{1}{\sqrt{nh_n^{q}}} + h_n^{s}\right) + \frac{1}{\alpha_n^{1/2}} \sqrt{\frac{\log h_n^{-1}}{nh_n^{p+q}}} + \alpha_n^{\beta/2}\right).
\end{equation*}
For uniform consistency of $\hat\varphi$, we need $\alpha_n^2nh_n^q\to \infty$, $\alpha_nnh_n^{p+q}\to\infty$, and $h_n^{s}/\alpha_n\to 0$ as $n\to\infty$, $\alpha_n\to0$, and $h_n\to 0$.

It remains an open question whether these convergence rates for the NPIV model are minimax-optimal for the class $\mathcal{F}(\beta,C)$. Note that this class does not require differentiability or specific H\"{o}lder smoothness of $\varphi_1$; see \cite{chen2018optimal}, which established minimax-optimal convergence rates for the NPIV model over the H\"{o}lder ball.

\section{Continuous Linear Functionals}\label{sec:flir}
In many economic applications, the primary focus is on a continuous linear functional of a structural function $\varphi$. This section demonstrates that the asymptotic distribution of such functionals can become nonstandard under identification failure. To simplify the exposition, we concentrate on the high-dimensional regression example in Section~\ref{sec:flir_example}; further results are provided in Appendix Section~\ref{sec:extreme}.

According to the Riesz representation theorem, any continuous linear functional on a Hilbert space $\mathcal{E}$ can be represented as an inner product with some $\mu\in \mathcal{E}$. Thus, we focus on the asymptotic distribution of $\langle \hat\varphi,\mu\rangle$. We let $\mu = \mu_0+\mu_1$ denote the orthogonal decomposition of $\mu$, where $\mu_0$ and $\mu_1$ are the orthogonal projections on $\mathcal{N}(K)$ and $\mathcal{N}(K)^\perp$, respectively. Similarly, we decompose $W = W^0 + W^1$, where $W^0$ and $W^1$ are the orthogonal projections of $W$ on $\mathcal{N}(K^*)$ and $\mathcal{N}(K^*)^\perp$, respectively.

We define the variance operator $\Sigma=\E[(Y-\langle Z,\varphi_1\rangle)^2W\otimes W]$ and the sequence
\begin{equation*}
	\pi_n = n^{1/2}\left\|\Sigma^{1/2}K(\alpha_n I + K^*K)^{-1}\mu_1\right\|^{-1},
\end{equation*}
with $\mu_1=\mu-\mu_0$. The following assumptions place mild restrictions on the data distribution and tuning parameters.
\begin{assumption}\label{as:data4}
	(i) The data $(Y_i,Z_i,W_i)_{i = 1}^n$ are an i.i.d. sample of $(Y,Z,W)$ with $\E|U|^{2+\delta}<\infty$ and $\E\|Z\|^{2+\delta}<\infty$ for some $\delta>0$; (ii) $\E\|ZW\|^2<\infty$, $\E\|UW\|^2<\infty$, $\E\|UZW\|<\infty$, $\E\|Z\|^2\|W\|<\infty$, and $\E\left[|U|\|Z\|\|W\|^2\right]<\infty$; and (iii) $W^0\ne 0$.
\end{assumption}

\begin{assumption}\label{as:tuning_inner}
	(i) $\mu_1\in\mathcal{R}(K^*K)^{\gamma}$ and $W\in\mathcal{R}(K^*K)^{\tilde\gamma}$ for some $\gamma,\tilde\gamma>0$ with $\tilde \gamma + \gamma\geq 1/2$ and (ii) $\alpha_n\to 0$, $n\alpha_n\to\infty$, $\pi_n\alpha_n^{(\gamma+\beta/2)\wedge 1}\to 0$, $\frac{\pi_n\alpha_n^{\gamma\wedge\frac{1}{2}}}{n\alpha_n} \to 0$, and $n\alpha_n^{1+\beta\wedge 1}\to 0$ as $n\to\infty$.
\end{assumption}
Assumption~\ref{as:tuning_inner} (ii) is an undersmoothing condition. Its requirements are the most stringent when $\pi_n\sim n^{1/2}$, in which case $n\alpha_n^{(2\gamma + \beta)\wedge 2}\to 0$, $n\alpha_n^{2-2\gamma\wedge 1}\to \infty$, and $n\alpha_n^{1+\beta\wedge 1}\to 0$ are needed. The last two conditions are not binding whenever $\gamma\geq1/2$ and $\beta>0$, in which case the first condition is automatically satisfied.

We let $(\chi_{j}^2)_{j\geq 1}$ be i.i.d. chi-square random variables with 1 degree of freedom, and we let $(\lambda_j)_{j\geq 1}$ be the eigenvalues of the operator $T:\phi\mapsto \E_{X}[\phi(X)h(X,X')]$ on $L_2(X)$ with
\begin{equation*}
	h(X,X') = \frac{\langle W^0,W^{0'}\rangle}{2}\left\{\langle Z,\mu_0\rangle (Y' - \langle Z',\varphi_1\rangle) + \langle Z',\mu_0\rangle(Y - \langle Z,\varphi_1\rangle)\right\},
\end{equation*}
where $\E_X$ denotes the expectation with respect to $X=(Y,Z,W)$ and where $X'$ is an independent copy of $X$.

The following result holds:
\begin{theorem}\label{thm:inner_products_flir}
	Under Assumptions~\ref{as:source4}, \ref{as:data4}, and \ref{as:tuning_inner}, we have
	\begin{equation*}
		n\alpha_n\langle \hat\varphi - \varphi_1,\mu_0\rangle \xrightarrow{d} \E\left[\|W\|^2(Y - \langle Z,\varphi_1\rangle)\langle Z,\mu_0\rangle\right] + \sum_{j\geq 1}\lambda_j(\chi_{j}^2-1)
	\end{equation*}
	and 
	\begin{equation*}
		\pi_n\langle \hat\varphi - \varphi,\mu_1\rangle \xrightarrow{d} N(0,1).
	\end{equation*}
\end{theorem}

Since
\begin{equation*}
	\langle \hat\varphi - \varphi_1,\mu\rangle = \langle \hat\varphi - \varphi_1,\mu_1\rangle  + \langle \hat\varphi - \varphi_1,\mu_0\rangle,
\end{equation*}
the following corollary is a trivial consequence of Theorem~\ref{thm:inner_products_flir}.

\begin{corollary}\label{cor:inner_products_flir}
	Under Assumptions~\ref{as:source4}, \ref{as:data4}, and \ref{as:tuning_inner}, we have the following:
	\begin{enumerate}
		\item If $\pi_n = o(n\alpha_n)$, then
		\begin{equation*}
			\pi_n\langle\hat{\varphi} - \varphi_1,\mu\rangle \xrightarrow{d} N(0,1).
		\end{equation*}
		\item If $n\alpha_n = o(\pi_n)$, then 
		\begin{equation*}
			n\alpha_n\langle\hat{\varphi} - \varphi_1,\mu\rangle \xrightarrow{d} \left(\E\left[\|W\|^2(Y - \langle Z,\varphi_1\rangle)\langle Z,\mu_0\rangle\right] + \sum_{j\geq 1}\lambda_j(\chi_{j}^2-1)\right).
		\end{equation*}
		\item If $\pi_n=n\alpha_n$, then
		\begin{equation*}
			n\alpha_n\langle\hat{\varphi} - \varphi_1,\mu\rangle \xrightarrow{d} N(0,1) + \left(\E\left[\|W\|^2(Y - \langle Z,\varphi_1\rangle)\langle Z,\mu_0\rangle\right] + \sum_{j\geq 1}\lambda_j(\chi_{j}^2-1)\right).
		\end{equation*}
	\end{enumerate}
\end{corollary}
The normalizing sequence in Corollary~\ref{cor:inner_products_flir} can be either $\pi_n$ or $n\alpha_n$, and the asymptotic distribution may feature a Gaussian component, a chi-square component, or both, depending on the mapping properties of the operators $K$ and $\Sigma$. Specifically, if the completeness condition holds, the asymptotic distribution consists solely of a Gaussian component. However, if the completeness condition fails, a chi-square component may also appear.

To illustrate this, we let $W=(W(t))_{t\in[0,1]}$ be a Brownian bridge with a Karhunen--Lo\`{e}ve decomposition $W(t) = \sum_{j=1}^\infty\frac{\xi_j}{j\pi}h_j(t)$, where $\xi_j\sim_{i.i.d.}N(0,1)$ and $h_j(t)=\sqrt{2}\sin(\pi j t)$; see \cite{shorack1986empirical}, p.33. We let $Z(s) = \sum_{j=1}^\infty\frac{\zeta_j}{j\pi}e_j(s)$ be a Gaussian process with $\zeta_j\sim_{i.i.d.}N(0,1)$, $\Cov(\zeta_j,\xi_k)=\rho\mathbf{1}_{j\ne k}$, and $e_j(s)=\sqrt{2}\cos(\pi js)$. In the high-dimensional IV regression, we have
\begin{equation*}
	Y = \int_0^1\varphi(s)Z(s)\dx s + U,\qquad \E[UW(t)] = 0,\qquad \forall t\in[0,1],
\end{equation*}
with $\varphi(s) = \sum_{j=1}^\infty j^{-a}e_j(s),a>1/2$ and $e_j(s)=\sqrt{2}\cos(\pi j s)$. We suppose that the object of interest is the average structural function
\begin{equation*}
	\int_0^1\varphi(s)\mu(s)\dx s = \langle \varphi,\mu\rangle
\end{equation*} 
for $\mu\in L_2[0,1]$ with $\mu(s)=1 + \sum_{j=1}^\infty j^{-b}e_j(s)$ and $b>1/2$.\footnote{Note that $\int_0^1\mu(s)\dx s=1$ and that $\varphi\in L_2[0,1]$.} The covariance operator $K=\E[W\otimes Z]:L_2[0,1]\to L_2[0,1]$ simplifies to $K=\sum_{j=1}^\infty\frac{\rho}{j^2\pi^2}h_j\otimes e_j$ and its adjoint to $K^* = \sum_{j=1}^\infty\frac{\rho}{j^2\pi^2}e_j\otimes h_j$. It can be observed that $1\in\mathcal{N}(K)$, $\varphi_0=0$, and $\mu_0=1$, so the structural function $\varphi$ and average structural function $\langle\varphi,\mu\rangle$ are identified even though $K$ has a nontrivial null space.

To verify Assumption~\ref{as:tuning_inner}, we observe that by Parseval's identity,
\begin{equation*}
	\left\|(K^*K)^{-\beta/2}\varphi_1\right\|^2 = \frac{\pi^{4\beta}}{\rho^{2\beta}}\sum_{j=1}^\infty\frac{1}{j^{2a-4\beta}}<\infty\iff a-2\beta>1/2
\end{equation*}
and
\begin{equation*}
	\left\|(K^*K)^{-\gamma}\mu_1\right\|^2 = \frac{\pi^{8\gamma}}{\rho^{4\gamma}}\sum_{j=1}^\infty\frac{1}{j^{2b-8\gamma}}<\infty \iff b-4\gamma>1/2;
\end{equation*}
cf. Section~\ref{sec:rates}. Under homoskedasticity, we have
$\Sigma = \sigma^2\E[W\otimes W]=\sum_{j=1}^\infty\frac{\sigma^2}{j^2\pi^2}h_j\otimes h_j$, which implies that
\begin{equation*}
	\|\Sigma^{1/2}K(\alpha_n + K^*K)^{-1}\mu_1\|^2 = \frac{\sigma^2}{\pi^2}\sum_{j=1}^\infty \frac{1}{j^{2(b+1)}}\frac{\sigma_j^2}{(\alpha_n + \sigma_j^2)^2}
\end{equation*}
with $\sigma_j=\frac{\rho}{j^2\pi^2}$. Therefore, we obtain
\begin{equation*}
	\pi_n^2\sim \frac{n}{\sum_{j=1}^\infty j^{-2(b+3)}(\alpha_n+j^{-4})^{-2}}.
\end{equation*}

For $\gamma=1/2$, we require that the function $\mu_1$ has sufficient regularity, with $b>5/2$. For larger values of $\beta$, more regularity in $\varphi_1$ is needed, which corresponds to larger values of $a$. In this case, $\pi_n^2\sim n$, and Assumption~\ref{as:tuning_inner} (ii) requires $n\alpha_n^2\to 0$, ensuring that $n\alpha_n=o(\pi_n)$. Consequently, Corollary~\ref{thm:inner_products_flir} shows that
\begin{equation*}
	n\alpha_n\langle\hat\varphi - \varphi,\mu\rangle \xrightarrow{d}  \E\left[U\langle Z,\mu_0\rangle\|W\|^2\right] + \sum_{j=1}^\infty\lambda_j(\chi^2_j-1).
\end{equation*}

\section{Monte Carlo Experiments}\label{sec:mc}
In this section, we examine the validity of our asymptotic theory via Monte Carlo simulations. The data-generating process (DGP) is specified as follows:
\begin{equation*}
	\begin{aligned}
		\varphi(z) & = \sum_{j=1}^\infty(-1)^jj^{-6}e_j(z),\qquad Y = \E[\varphi(Z)|W] + U,\qquad U\sim 0.1N(0,1),\\
		f_{ZW}^J(z,w) & = C_J\sum_{j=1}^{J}j^{-3}e_j(z)e_j(w),\qquad \forall z,w\in[0,1],
	\end{aligned}
\end{equation*}
where we use the basis $e_j(x)=\sqrt{2}\sin(j\pi x),j\geq 1$ and set $C_J$ such that the density integrates to 1; see also \cite{hall2005nonparametric}. The parameter $J$ controls the degree of identification, with $K:L_2[0,1]\to L_2[0,1]$ having an infinite-dimensional null space whenever $J<\infty$. Specifically, $K$ has a $J$-dimensional range and
\begin{equation*}
	\mathcal{N}(K) \subset \mathrm{span}\{e_j:\; j> J\}.
\end{equation*}
Accordingly, we consider three cases: $J\in\{1,2,\infty\}$, where $J=\infty$ corresponds to the fully identified model.

\begin{table}[ht]
	\centering
	\begin{tabular}{lccccc}
		\hline
		\hline
		& \multicolumn{2}{l}{$n=100$} & \multicolumn{2}{l}{$n=500$} \\ \hline
		$J$ & $L_2$ & $L_\infty$ & $L_2$ & $L_\infty$ \\
		\hline
		1 & 0.0378 & 0.3295 & 0.0162 & 0.2156 \\
		2 & 0.0345 & 0.3199 & 0.0130 & 0.2054 \\
		$\infty$ & 0.0311 & 0.3109 & 0.0107 & 0.1974 \\
		\hline
		\hline
	\end{tabular}
	\caption{$L_2$ and $L_\infty$ errors calculated from $5,000$ experiments.}
	\label{tab:1}
\end{table}

\begin{figure}[ht]
	\begin{subfigure}[b]{0.42\textwidth}
		\includegraphics[width=\textwidth]{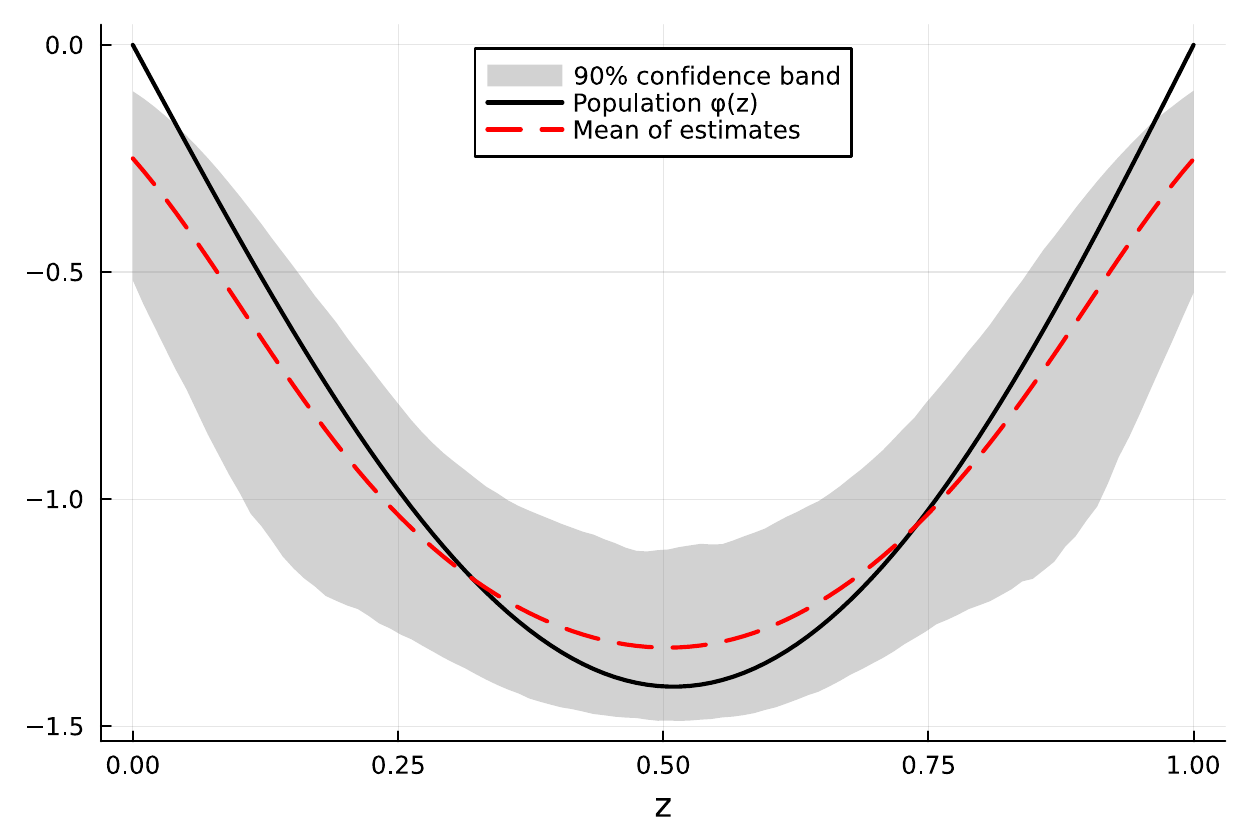}
		\caption{$n = 100$ and $J=1$}
	\end{subfigure}
	\begin{subfigure}[b]{0.42\textwidth}
		\includegraphics[width=\textwidth]{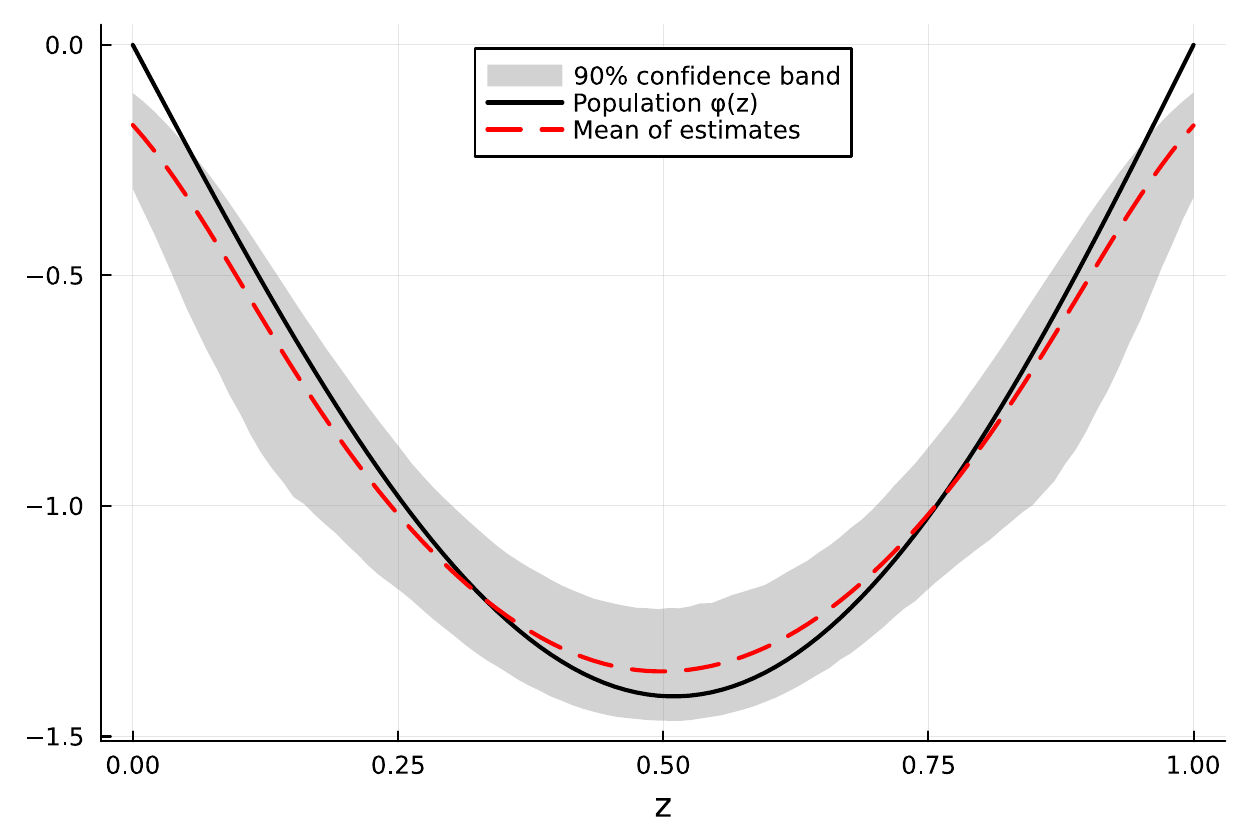}
		\caption{$n = 500$ and $J=1$}
	\end{subfigure}
	\begin{subfigure}[b]{0.42\textwidth}
		\includegraphics[width=\textwidth]{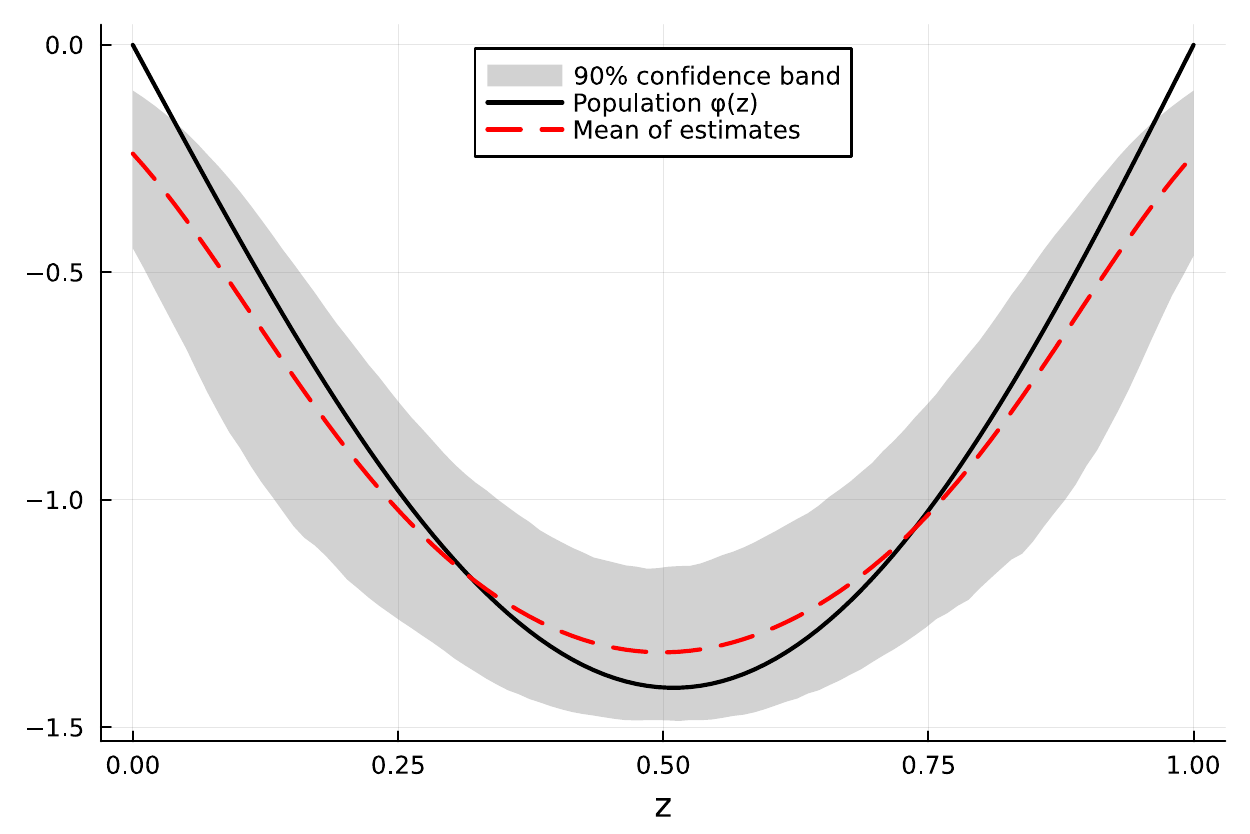}
		\caption{$n = 100$ and $J=\infty$}
	\end{subfigure}
	\qquad\qquad\qquad
	\begin{subfigure}[b]{0.42\textwidth}
		\includegraphics[width=\textwidth]{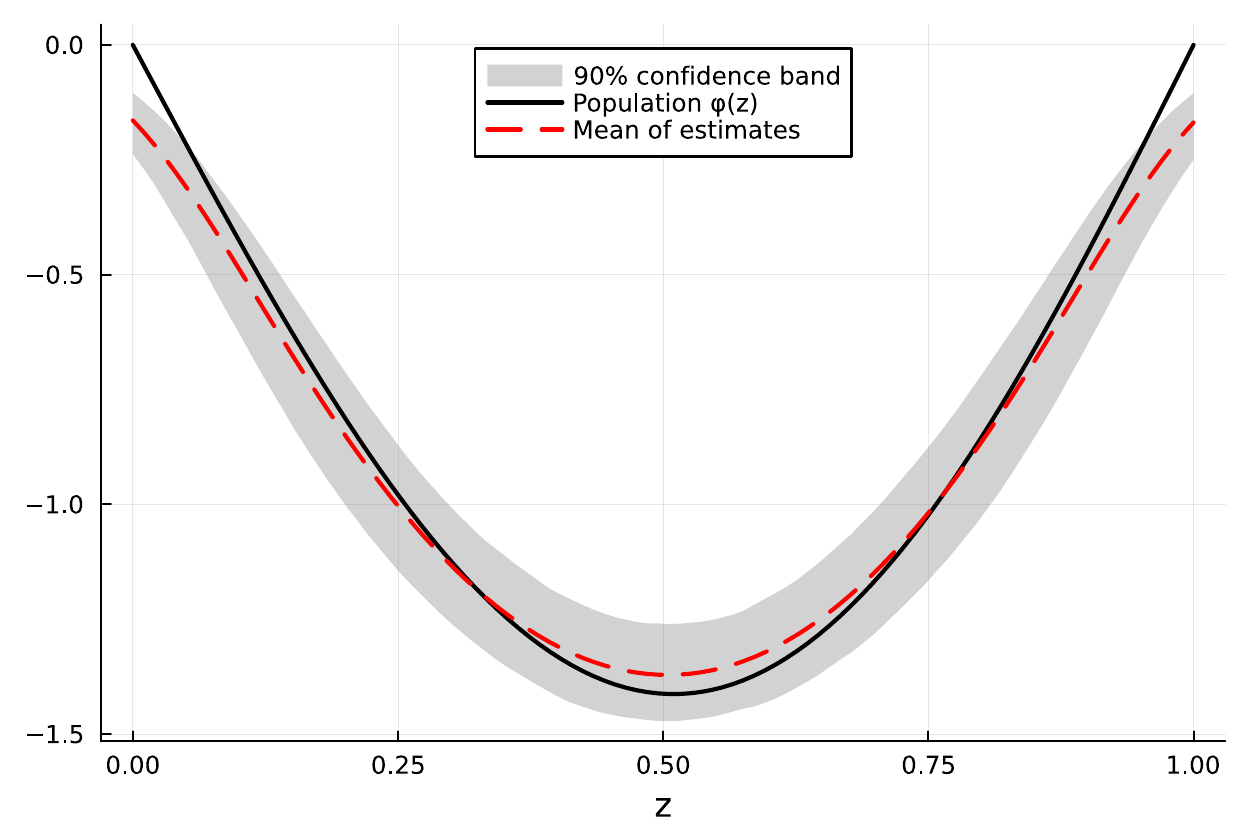}
		\caption{$n = 500$ and $J=\infty$}
	\end{subfigure}
	\caption{Mean estimates with pointwise $90\%$ empirical confidence bands calculated from $5,000$ experiments.}
	\label{fig:1}
\end{figure}

We simulate samples of size $n\in\{100,500\}$ with $5,000$ replications. For simplicity, we focus on the Tikhonov-regularized estimator with a product of Gaussian kernels to estimate the best approximation $\varphi_1$. The bandwidth parameter is selected via Silverman's rule of thumb, and the regularization parameter is chosen via leave-one-out cross-validation; see \cite{babii2020honest} and \cite{centorrino2014data} for details on the numerical implementation. All the integrals are approximated by Riemann sums on a uniform grid in $[0,1]$, and infinite sums are truncated at a large finite value beyond which the numerical results remain unchanged.

Table~\ref{tab:1} shows the empirical $L_2$ and $L_\infty$ errors for the three degrees of identification, as measured by $J\in\{1,2,\infty\}$. The estimation errors decrease as the dimension of the null space shrinks, for both $n=100$ and $n=500$. Larger sample sizes also result in smaller errors, which is consistent with our asymptotic theory. Figure~\ref{fig:1} displays the true structural function $\varphi$, the mean of the estimates $\E\hat\varphi$, and the pointwise 90\% empirical confidence bands based on 5,000 replications. The results align with those in Table~\ref{tab:1}. Note that for $J=1$, we can only recover information related to the first basis vector, illustrating an extreme failure of the completeness condition. Nevertheless, even in this case, a significant amount of information about the structural function can be learned, especially when $n=500$.

\section{Conclusions}\label{sec:conclusion}
This paper develops a theory for nonidentified linear models, using nonparametric IV regression and high-dimensional regressions as illustrative examples. Identification failures arise because of the noninjectivity of covariance or conditional expectation operators. We show that when these operators are noninjective, spectral-regularized estimators converge to the best approximation of the structural parameter in the orthogonal complement of the operator's null space, and we derive new bounds on the risk in both uniform and Hilbert space norms.

This framework offers an appealing projection interpretation for nonparametric IV regression under identification failure, which is analogous to that of ordinary least squares under misspecification. We describe cases where the best approximation coincides with or closely approximates the structural parameter, highlighting the usefulness of these results in partial identification settings. Additionally, incorporating smoothness constraints into the regularization procedure can yield more precise estimates in small samples; see \cite{babii2017unobservables} for an approach using Sobolev-type penalties. Finally, under identification failure, the asymptotic distribution of a continuous linear functional can transition between weighted chi-square and Gaussian components.

{ To conclude, despite a large amount of research over the last two decades, there is only a small amount of empirical work using nonparametric IV regression. One reason is that it involves identifying conditions that are difficult to validate empirically and our paper shows that numerous results may be obtained without these conditions. Economic analysis often hinges on strong functional restrictions that may lead to erroneous counterfactual conclusions and policy recommendations. Nonparametric IV regression is useful for obtaining robust and reliable estimates; see e.g. \cite{compiani2022market}.}

\bibliography{references}

\begin{thebibliography}{52}
\providecommand{\natexlab}[1]{#1}
\providecommand{\url}[1]{\texttt{#1}}
\expandafter\ifx\csname urlstyle\endcsname\relax
  \providecommand{\doi}[1]{doi: #1}\else
  \providecommand{\doi}{doi: \begingroup \urlstyle{rm}\Url}\fi

\bibitem[Andrews(2017)]{andrews2017examples}
D.~W. Andrews.
\newblock Examples of $\ell^2$-complete and boundedly-complete distributions.
\newblock \emph{Journal of Econometrics}, 199\penalty0 (2):\penalty0 213--220, 2017.

\bibitem[Angrist and Pischke(2008)]{angrist2008mostly}
J.~D. Angrist and J.-S. Pischke.
\newblock \emph{Mostly harmless econometrics}.
\newblock Princeton University Press, 2008.

\bibitem[Babii(2020)]{babii2020honest}
A.~Babii.
\newblock Honest confidence sets in nonparametric iv regression and other ill-posed models.
\newblock \emph{Econometric Theory}, 36\penalty0 (4):\penalty0 658--706, 2020.

\bibitem[Babii(2022)]{babii2021high}
A.~Babii.
\newblock High-dimensional mixed-frequency iv regression.
\newblock \emph{Journal of Business \& Economic Statistics}, 4\penalty0 (40), 2022.

\bibitem[Babii and Florens(2025)]{babii2017unobservables}
A.~Babii and J.-P. Florens.
\newblock Are unobservables separable?
\newblock \emph{Econometric Theory}, { first published online Jan 26 2024}:\penalty0 { 1--33}, 2025.
\newblock \doi{https://doi.org/10.1017/S026646662400001X}.

\bibitem[Babii et~al.(2024)Babii, Carrasco, and Tsafack]{babii2024functional}
A.~Babii, M.~Carrasco, and I.~Tsafack.
\newblock Functional partial least-squares: optimal rates and adaptation.
\newblock \emph{arXiv preprint arXiv:2402.11134}, 2024.

\bibitem[Bakushinskii(1967)]{bakushinskii1967general}
A.~B. Bakushinskii.
\newblock A general method of constructing regularizing algorithms for a linear incorrect equation in hilbert space.
\newblock \emph{Zhurnal Vychislitel'noi Matematiki i Matematicheskoi Fiziki}, 7\penalty0 (3):\penalty0 672--677, 1967.

\bibitem[Blundell et~al.(2007)Blundell, Chen, and Kristensen]{blundell2007semi}
R.~Blundell, X.~Chen, and D.~Kristensen.
\newblock Semi-nonparametric iv estimation of shape-invariant engel curves.
\newblock \emph{Econometrica}, 75\penalty0 (6):\penalty0 1613--1669, 2007.

\bibitem[Bontemps et~al.(2025)Bontemps, Florens, and Meddahi]{bontemps2024functional}
C.~Bontemps, J.-P. Florens, and N.~Meddahi.
\newblock Functional ecological inference.
\newblock \emph{Journal of Econometrics}, page 105918, 2025.
\newblock \doi{https://doi.org/10.1016/j.jeconom.2024.105918}.

\bibitem[Bosq(2000)]{bosq2000linear}
D.~Bosq.
\newblock \emph{Linear processes in function spaces}.
\newblock Springer, 2000.

\bibitem[Canay et~al.(2013)Canay, Santos, and Shaikh]{canay2013testability}
I.~A. Canay, A.~Santos, and A.~M. Shaikh.
\newblock On the testability of identification in some nonparametric models with endogeneity.
\newblock \emph{Econometrica}, 81\penalty0 (6):\penalty0 2535--2559, 2013.

\bibitem[Carrasco et~al.(2007)Carrasco, Florens, and Renault]{carrasco2007linear}
M.~Carrasco, J.-P. Florens, and E.~Renault.
\newblock Linear inverse problems in structural econometrics estimation based on spectral decomposition and regularization.
\newblock \emph{Handbook of Econometrics}, 6B:\penalty0 5633--5751, 2007.

\bibitem[Carrasco et~al.(2014)Carrasco, Florens, and Renault]{carrasco2013asymptotic}
M.~Carrasco, J.-P. Florens, and E.~Renault.
\newblock Asymptotic normal inference in linear inverse problems.
\newblock \emph{Handbook of applied nonparametric and semiparametric econometrics and statistics}, pages 65--96, 2014.

\bibitem[Centorrino(2014)]{centorrino2014data}
S.~Centorrino.
\newblock Data driven selection of the regularization parameter in additive nonparametric instrumental regressions.
\newblock \emph{Economics Department, Stony Brook University}, 2014.

\bibitem[Chen and Christensen(2018)]{chen2018optimal}
X.~Chen and T.~M. Christensen.
\newblock Optimal sup-norm rates and uniform inference on nonlinear functionals of nonparametric iv regression.
\newblock \emph{Quantitative Economics}, 9\penalty0 (1):\penalty0 39--84, 2018.

\bibitem[Chen and Pouzo(2012)]{chen2012estimation}
X.~Chen and D.~Pouzo.
\newblock Estimation of nonparametric conditional moment models with possibly nonsmooth generalized residuals.
\newblock \emph{Econometrica}, 80\penalty0 (1):\penalty0 277--321, 2012.

\bibitem[Chernozhukov et~al.(2023)Chernozhukov, Newey, and Santos]{chernozhukov2023constrained}
V.~Chernozhukov, W.~K. Newey, and A.~Santos.
\newblock Constrained conditional moment restriction models.
\newblock \emph{Econometrica}, 91\penalty0 (2):\penalty0 709--736, 2023.

\bibitem[Compiani(2022)]{compiani2022market}
G.~Compiani.
\newblock Market counterfactuals and the specification of multiproduct demand: A nonparametric approach.
\newblock \emph{Quantitative Economics}, 13\penalty0 (2):\penalty0 545--591, 2022.

\bibitem[Darolles et~al.(2011)Darolles, Fan, Florens, and Renault]{darolles2011nonparametric}
S.~Darolles, Y.~Fan, J.-P. Florens, and E.~Renault.
\newblock Nonparametric instrumental regression.
\newblock \emph{Econometrica}, 79\penalty0 (5):\penalty0 1541--1565, 2011.

\bibitem[D'Haultfoeuille(2011)]{d2010completeness}
X.~D'Haultfoeuille.
\newblock On the completeness condition in nonparametric instrumental problems.
\newblock \emph{Econometric Theory}, 27\penalty0 (3):\penalty0 460--471, 2011.

\bibitem[Egger(2005)]{egger2005accelerated}
H.~Egger.
\newblock Accelerated newton-landweber iterations for regularizing nonlinear inverse problems.
\newblock \emph{RICAM-Report}, 1, 2005.

\bibitem[Engl et~al.(1996)Engl, Hanke, and Neubauer]{engl1996regularization}
H.~W. Engl, M.~Hanke, and A.~Neubauer.
\newblock \emph{Regularization of inverse problems}.
\newblock Springer, 1996.

\bibitem[Escanciano and Li(2021)]{escanciano2018optimal}
J.~C. Escanciano and W.~Li.
\newblock Optimal linear instrumental variables approximations.
\newblock \emph{Journal of Econometrics}, 221\penalty0 (1):\penalty0 223--246, 2021.

\bibitem[Florens and Van~Bellegem(2015)]{florens2015instrumental}
J.-P. Florens and S.~Van~Bellegem.
\newblock Instrumental variable estimation in functional linear models.
\newblock \emph{Journal of Econometrics}, 186\penalty0 (2):\penalty0 465--476, 2015.

\bibitem[Florens et~al.(2011)Florens, Johannes, and Van~Bellegem]{florens2011identification}
J.-P. Florens, J.~Johannes, and S.~Van~Bellegem.
\newblock Identification and estimation by penalization in nonparametric instrumental regression.
\newblock \emph{Econometric Theory}, 27\penalty0 (3):\penalty0 472--496, 2011.

\bibitem[Freyberger(2017)]{freyberger2017completeness}
J.~Freyberger.
\newblock On completeness and consistency in nonparametric instrumental variable models.
\newblock \emph{Econometrica}, 85\penalty0 (5):\penalty0 1629--1644, 2017.

\bibitem[Friedman(2001)]{friedman2001greedy}
J.~H. Friedman.
\newblock Greedy function approximation: a gradient boosting machine.
\newblock \emph{Annals of Statistics}, 29\penalty0 (5):\penalty0 1189--1232, 2001.

\bibitem[Gagliardini and Scaillet(2012)]{gagliardini2012tikhonov}
P.~Gagliardini and O.~Scaillet.
\newblock Tikhonov regularization for nonparametric instrumental variable estimators.
\newblock \emph{Journal of Econometrics}, 167\penalty0 (1):\penalty0 61--75, 2012.

\bibitem[Gin{\'e} and Nickl(2016)]{gine2015mathematical}
E.~Gin{\'e} and R.~Nickl.
\newblock \emph{Mathematical foundations of infinite-dimensional statistical models}.
\newblock Cambridge University Press, 2016.

\bibitem[Gregory(1977)]{gregory1977large}
G.~G. Gregory.
\newblock Large sample theory for u-statistics and tests of fit.
\newblock \emph{Annals of Statistics}, 5\penalty0 (1):\penalty0 110--123, 1977.

\bibitem[Groetsch(1977)]{groetsch1977generalized}
C.~W. Groetsch.
\newblock \emph{Generalized inverses of linear operators}.
\newblock Chapman \& Hall Pure and Applied Mathematics, 1977.

\bibitem[Hall and Horowitz(2005)]{hall2005nonparametric}
P.~Hall and J.~L. Horowitz.
\newblock Nonparametric methods for inference in the presence of instrumental variables.
\newblock \emph{Annals of Statistics}, 33\penalty0 (6):\penalty0 2904--2929, 2005.

\bibitem[Hu and Shiu(2018)]{hu2017nonparametric}
Y.~Hu and J.-L. Shiu.
\newblock Nonparametric identification using instrumental variables: sufficient conditions for completeness.
\newblock \emph{Econometric Theory}, 34\penalty0 (3):\penalty0 659--693, 2018.

\bibitem[Koopmans(1949)]{koopmans1949identification}
T.~C. Koopmans.
\newblock Identification problems in economic model construction.
\newblock \emph{Econometrica}, 17\penalty0 (2):\penalty0 125--144, 1949.

\bibitem[Koopmans and Reiersol(1950)]{koopmans1950identification}
T.~C. Koopmans and O.~Reiersol.
\newblock The identification of structural characteristics.
\newblock \emph{Annals of Mathematical Statistics}, 21\penalty0 (2):\penalty0 165--181, 1950.

\bibitem[Korolyuk and Borovskich(1994)]{korolyuk2013theory}
V.~S. Korolyuk and Y.~V. Borovskich.
\newblock \emph{Theory of U-statistics}.
\newblock Springer Science \& Business Media, 1994.

\bibitem[Kress(2014)]{kress2013linear}
R.~Kress.
\newblock \emph{Linear integral equations}.
\newblock Springer Science \& Business Media, 2014.

\bibitem[Luenberger(1997)]{luenberger1997optimization}
D.~G. Luenberger.
\newblock \emph{Optimization by vector space methods}.
\newblock John Wiley \& Sons, 1997.

\bibitem[Math{\'e} and Pereverzev(2002)]{mathe2002moduli}
P.~Math{\'e} and S.~V. Pereverzev.
\newblock Moduli of continuity for operator valued functions.
\newblock \emph{Numerical Functional Analysis and Optimization}, 23\penalty0 (5-6):\penalty0 623--631, 2002.

\bibitem[Nair(2009)]{nair2009linear}
M.~T. Nair.
\newblock \emph{Linear operator equations: approximation and regularization}.
\newblock World Scientific Publishing Company, 2009.

\bibitem[Newey and Powell(2003)]{newey2003instrumental}
W.~K. Newey and J.~L. Powell.
\newblock Instrumental variable estimation of nonparametric models.
\newblock \emph{Econometrica}, 71\penalty0 (5):\penalty0 1565--1578, 2003.

\bibitem[Rothenberg(1971)]{rothenberg1971identification}
T.~J. Rothenberg.
\newblock Identification in parametric models.
\newblock \emph{Econometrica}, 39\penalty0 (3):\penalty0 577--591, 1971.

\bibitem[Rudin(1991)]{rudin1991functional}
W.~Rudin.
\newblock \emph{Functional analysis}.
\newblock McGraw-Hill, 1991.

\bibitem[Santos(2011)]{santos2011instrumental}
A.~Santos.
\newblock Instrumental variable methods for recovering continuous linear functionals.
\newblock \emph{Journal of Econometrics}, 161\penalty0 (2):\penalty0 129--146, 2011.

\bibitem[Santos(2012)]{santos2012inference}
A.~Santos.
\newblock Inference in nonparametric instrumental variables with partial identification.
\newblock \emph{Econometrica}, 80\penalty0 (1):\penalty0 213--275, 2012.

\bibitem[Severini and Tripathi(2006)]{severini2006some}
T.~A. Severini and G.~Tripathi.
\newblock Some identification issues in nonparametric linear models with endogenous regressors.
\newblock \emph{Econometric Theory}, 22\penalty0 (2):\penalty0 258--278, 2006.

\bibitem[Severini and Tripathi(2012)]{severini2012efficiency}
T.~A. Severini and G.~Tripathi.
\newblock Efficiency bounds for estimating linear functionals of nonparametric regression models with endogenous regressors.
\newblock \emph{Journal of Econometrics}, 170\penalty0 (2):\penalty0 491--498, 2012.

\bibitem[Shorack and Wellner(1986)]{shorack1986empirical}
G.~R. Shorack and J.~A. Wellner.
\newblock \emph{Empirical processes with applications to statistics}.
\newblock John Wiley \& Sons, 1986.

\bibitem[Tikhonov(1963)]{tikhonov1963solution}
A.~N. Tikhonov.
\newblock On the solution of ill-posed problems and the method of regularization.
\newblock \emph{Doklady Akademii Nauk}, 151\penalty0 (3):\penalty0 501--504, 1963.

\bibitem[Tsybakov et~al.(2009)Tsybakov, Bickel, and Ritov]{tsybakov2009simultaneous}
A.~Tsybakov, P.~Bickel, and Y.~Ritov.
\newblock Simultaneous analysis of lasso and dantzig selector.
\newblock \emph{Annals of Statistics}, 37\penalty0 (4):\penalty0 1705--1732, 2009.

\bibitem[van~der Vaart and Wellner(2000)]{van2000weak}
A.~W. van~der Vaart and J.~A. Wellner.
\newblock \emph{Weak convergence and empirical processes}.
\newblock Springer, 2000.

\bibitem[Yao et~al.(2005)Yao, M{\"u}ller, and Wang]{yao2005functional}
F.~Yao, H.-G. M{\"u}ller, and J.-L. Wang.
\newblock Functional data analysis for sparse longitudinal data.
\newblock \emph{Journal of the American Statistical Association}, 100\penalty0 (470):\penalty0 577--590, 2005.

\end{thebibliography}

\newpage
\setcounter{page}{1}
\setcounter{section}{0}
\setcounter{equation}{0}
\setcounter{table}{0}
\setcounter{figure}{0}
\renewcommand{\theequation}{A.\arabic{equation}}
\renewcommand\thetable{A.\arabic{table}}
\renewcommand\thefigure{A.\arabic{figure}}
\renewcommand\thesection{A.\arabic{section}}
\renewcommand\thepage{Appendix - \arabic{page}}
\renewcommand\thetheorem{A.\arabic{theorem}}

\begin{center}
	{\LARGE\textbf{APPENDIX}}	
\end{center}
\bigskip

\section{Spectral Regularization}\label{sec:general}
In this section, we extend our results to encompass a range of estimators based on spectral regularization. This framework includes iterated ridge (Tikhonov) regression, functional gradient descent (Landweber-Fridman), and functional PCA (spectral cut-off) methods. Consider the linear operator equation:
\begin{equation*}
K\varphi = r,
\end{equation*}
where $K:\mathcal{E}\to\mathcal{H}$ is a linear operator between Hilbert spaces $\mathcal{E}$ and $\mathcal{H}$. We assume that $K$ is bounded, with $\|K\|^2\leq \Lambda$ for some $\Lambda<\infty$, though it may not be compact. Then $K^*K:\mathcal{E}\to\mathcal{E}$ is a normal operator with the spectral decomposition:
\begin{equation*}
K^*K = \int_{\sigma(K^*K)}\lambda\dx E(\lambda),
\end{equation*}
where $\sigma(K^*K)$ denotes the spectrum of $K^*K$ and $E$ is the resolution of the identity; see \cite{rudin1991functional}, Theorem 12.23. For a bounded Borel function $g:[0,\Lambda]\to \R$, we define:
\begin{equation*}
	g(K^*K) = \int_{\sigma(K^*K)} g(\lambda)\dx E(\lambda).
\end{equation*}
If $K$ is compact, the spectrum of $K^*K$ is countable, and we have
\begin{equation*}
g(K^*K) = \sum_{j=1}^\infty g(\sigma_j^2)E_j,
\end{equation*}
where $E_j=h_j\otimes e_j$ projects on the eigenspace corresponding to the eigenvalue $\sigma_j^2$. 

We aim to recover the best approximation to the structural parameter $\varphi$ when estimates $(\hat K,\hat r)$ of $(K,r)$ are available with $\|\hat K\|^2\leq\Lambda$ almost surely. To do this, we consider an extended version of Assumption~\ref{as:source4}:
\begin{assumption}\label{as:general_source}
	Suppose that $(\varphi,K)$ belongs to
	\begin{equation*}
		\mathcal{F}(\beta,C)=\left\{(\varphi,K):\; \varphi_1=s_\beta(K^*K)\psi,\;\|\psi\|^2\vee\|\varphi_0\|\vee\|K\|\leq C\right\},
	\end{equation*}
	where $s_\beta:[0,\Lambda]\to\R$ is a non-decreasing positive function such that $\lambda\mapsto s_\beta^2(\lambda)/\lambda^{\beta}$ is non-increasing.
\end{assumption}
The following two cases are of particular interest:
\begin{enumerate}
	\item Mildly ill-posed problem: $s_\beta(\lambda) = \lambda^{\beta/2}$.
	\item Severely ill-posed problem: $s_\beta(\lambda) = \log^{-\beta/2}\left(\frac{1}{\lambda}\right)$ with $s_\beta(0)=0$.
\end{enumerate}
In the mildly ill-posed case, the eigenvalues of $K^*K$ can decline exponentially, provided that the Fourier coefficients of $\varphi_1$ decay at the same rate. In contrast, the severely ill-posed case accommodates less regular $\varphi_1$ and $K$.

The spectral regularization scheme is defined by a family of bounded Borel functions $g_\alpha: [0,\infty)\to\R$, where $\alpha>0$ is a regularization parameter such that $\lim_{\alpha\to 0}g_\alpha(\lambda)=\lambda^{-1}$. When $\hat K$ is bounded, the regularized estimator is defined by
\begin{equation*}
	\hat{\varphi} = g_\alpha(\hat K^*\hat K)\hat K^*\hat r.
\end{equation*}
Our theoretical results require the following assumption on the regularization scheme:
\begin{assumption}\label{as:regularization_schemes}
	There exist $c_1,c_2,c_3,\beta_0>0$ such that: (i) $\sup_{\lambda\leq\Lambda }|g_\alpha(\lambda)\lambda^{1/2}| \leq c_1/\sqrt{\alpha}$; (ii) $\sup_{\lambda\leq\Lambda}|(g_\alpha(\lambda)\lambda - 1)\lambda^r| \leq c_2\alpha^r,\forall r\in[0,2\beta_0]$; and (iii) $\sup_{\lambda\leq\Lambda}|g_\alpha(\lambda)| \leq c_3/\alpha$.
\end{assumption}

The following regularization schemes satisfy Assumption~\ref{as:regularization_schemes} in  both the mildly and severely ill-posed cases:\footnote{In the severely ill-posed case, the function $\lambda\mapsto s_\beta^2(\lambda)/\lambda^\beta$ is nonincreasing only on $(0,1/e]$ and $s_\beta$ is not defined at $\lambda=1$. To address these issues, we assume that the norm on $\mathcal{H}$ is scaled such that $\|K\|^2\leq 1/e$.}
\begin{enumerate}
	\item Ridge (Tikhonov):
	\begin{equation*}
	g_\alpha(\lambda) = \frac{1}{\alpha + \lambda}.
	\end{equation*}
	Assumption~\ref{as:regularization_schemes} holds with $c_1 = 1/2$, $c_2=c_3=1$, and $\beta_0=2$.
	\item PCA (spectral cut-off):
	\begin{equation*}
	g_\alpha(\lambda) = \lambda^{-1}\one\{\lambda\geq\alpha\}.
	\end{equation*}
	Assumption~\ref{as:regularization_schemes} holds with $c_1=c_2=c_3=1$ and every $\beta_0>0$.
	\item Iterated Tikhonov/ridge regularization:
	\begin{equation*}
		g_{\alpha,m}(\lambda) = \sum_{j=0}^{m-1}\frac{\alpha^j}{(\alpha + \lambda)^{j+1}} = \frac{1}{\lambda}\left(1 - \left(\frac{\alpha}{\lambda+\alpha}\right)^{m}\right),\qquad m=2,3,\dots
	\end{equation*}
	Assumption~\ref{as:regularization_schemes} holds with $c_1 =m^{1/2}$, $c_2=1$, and $c_3=\beta_0=m$.
	\item Gradient descent (Landweber-Fridman):
	\begin{equation}\label{eq:gradient_regularization}
		g_{\alpha,c}(\lambda) = \sum_{j=0}^{1/\alpha - 1}(1-c\lambda)^j = \frac{1}{\lambda}\left(1 - (1-c\lambda)^{1/\alpha}\right),
	\end{equation}
	where $\alpha = 1/m$ for some $m\in\Nn$ and $c\in(0,1/\Lambda)$. Assumption~\ref{as:regularization_schemes} is satisfied with $c_1^2=c$, $c_2=c\vee 1$, $c_3=\left(\frac{\beta}{ce}\right)^{\beta/2}\vee 1$, and every $\beta_0>0$.
\end{enumerate}
The constant $\beta_0$ is known as the qualification of the regularization scheme. It is well-established that simple Tikhonov regularization experiences a saturation effect, limiting bias convergence to $O(\alpha^2)$. This issue can be resolved using iterated Tikhonov regularization, similar to higher-order kernels in nonparametric kernel estimation. 

To provide a spectral interpretation of gradient descent in point 4, consider the minimization problem:
\begin{equation*}
	\min_{\phi}\underbrace{\frac{1}{2}\|K\phi - r\|^2}_{=:Q(\phi)},
\end{equation*}
where the gradient is $\nabla Q(\phi):=K^*(K\phi - r)$. The gradient descent with a constant step size $c\in(0,1/\Lambda)$, leads to the update rule:
\begin{equation*}
	\begin{aligned}
		\phi_m & = \phi_{m-1} - c\nabla Q(\phi_{m-1}) \\
		& = [I-cK^*K]\phi_{m-1} + cK^*r.
	\end{aligned}	
\end{equation*}
Iterating this update rule recursively, we obtain
\begin{equation*}
	\phi_m = [I-cK^*K]^m\phi_0 + \sum_{j=0}^{m-1}[I-cK^*K]^jK^*r.
\end{equation*}
Assuming that the initial value is $\phi_0=0$ and replacing $(K,r)$ by $(\hat K,\hat r)$, we obtain 
\begin{equation*}
	\hat\varphi_{\alpha} = g_{\alpha,c}(\hat K^*\hat K)\hat K^*\hat r
\end{equation*}
with $g_{\alpha,c}$ and $\alpha=1/m$ as in equation~(\ref{eq:gradient_regularization}).

The following theorem establishes the convergence rate of $\hat\varphi$ to $\varphi_1$ for general regularization schemes:
\begin{theorem}\label{thm:L2_rates_general}
	Suppose Assumptions~\ref{as:general_source}, \ref{as:regularization_schemes}, and \ref{as:rates4} hold with $\beta\leq\beta_0$.\footnote{Note that we can always take $\beta=\beta_0$ if $\beta>\beta_0$.} Then, in the mildly ill-posed case,
	\begin{equation*}
		\left\|\hat \varphi - \varphi_1\right\|^2 = O_P\left(\frac{\delta_{n}}{\alpha_n} + \rho_{1n}^{\beta\wedge1}\left(1 + \one_{\beta=1}\log^2\rho_{1n}^{-1} \right) + \alpha_n^\beta\right),
	\end{equation*}
	and in the severely ill-posed case,
	\begin{equation*}
		\left\|\hat \varphi - \varphi_1\right\|^2 = O_P\left(\frac{\delta_{n}}{\alpha_n} + \log^{-\beta}\rho_{1n}^{-1} + \log^{-\beta}\alpha_n^{-1}\right).
	\end{equation*}
\end{theorem}
Before presenting the proof, note that:
\begin{enumerate}
	\item For the mildly ill-posed case, the optimal choice for the regularization parameter is $\alpha_n\sim \delta_n^\frac{1}{\beta+1}$, assuming that $\rho_{1n}^{\beta\wedge 1}\log^2n\lesssim \delta_n/\alpha_n$. This selection results in the convergence rate:
	\begin{equation*}
		\|\hat\varphi - \varphi_1\|^2 = O_P\left(\delta_n^\frac{\beta}{\beta+1}\right).
	\end{equation*}
	\item For the severely ill-posed case, one can select $\alpha_n\sim \delta_n^{1/2}$. This choice yields the convergence rate:
	\begin{equation*}
		\|\hat\varphi - \varphi_1\|^2 = O_P\left(\frac{1}{\log^\beta n}\right),
	\end{equation*}
	assuming $\delta_n,\rho_n\sim n^{-c}$ for some $c>0$.
\end{enumerate}

\begin{proof}[Proof of Theorem~\ref{thm:L2_rates_general}]
	Decompose
	\begin{equation*}
	\hat{\varphi} - \varphi_1 = I_n + II_n + III_n,
	\end{equation*}
	where
	\begin{equation*}
	\begin{aligned}
	I_n & = g_{\alpha}(\hat K^*\hat K)\hat K^*(\hat r - \hat K\varphi), \\
	II_n & = \left[g_{\alpha}(\hat K^*\hat K)\hat K^*\hat K - I\right]s_\beta(\hat K^*\hat K)\psi, \\
	III_n & = \left[g_{\alpha}(\hat K^*\hat K)\hat K^*\hat K - I\right]\left\{s_\beta(K^*K) - s_\beta(\hat K^*\hat K)\right\}\psi.
	\end{aligned}
	\end{equation*}
	To verify this decomposition, observe that $\varphi = \varphi_1+\varphi_0$ and, under Assumption~\ref{as:general_source}, $\varphi_1 = s_\beta(K^*K)\psi$. Using the isometry of functional calculus, we bound each term as follows:
	\begin{equation*}
	\begin{aligned}
	\|I_n\|^2 & \leq \left\|g_{\alpha}(\hat K^*\hat K)\hat K^*\right\|^2\left\|\hat r - \hat K\varphi_1\right\|^2 \\
	& \leq \sup_{\lambda\leq\Lambda}\left|g_{\alpha}(\lambda)\lambda^{1/2}\right|^2\left\|\hat r - \hat K\varphi_1\right\|^2.
	\end{aligned}
	\end{equation*}
	Under Assumptions~\ref{as:regularization_schemes} (i) and \ref{as:rates4}, this shows that $\E\|I_n\|^2 = O\left(\delta_{n}/\alpha_n\right)$.
	
	Note also that under Assumption~\ref{as:general_source}, $s_\beta$ is nondecreasing, whence under Assumption~\ref{as:regularization_schemes} (ii) with $r=0$, we obtain
	\begin{equation*}
	\left|(g_{\alpha}(\lambda)\lambda - 1)s_\beta(\lambda)\right|\leq c_2|s_\beta(\lambda)| \leq c_2s_\beta(\alpha_n),\qquad \forall \lambda\in[0,\alpha_n].
	\end{equation*}
	Similarly, since $\lambda\mapsto s_\beta(\lambda)/\lambda^{\beta/2}$ is nonincreasing, under Assumption~\ref{as:regularization_schemes} (ii) with $r=\beta/2$
	\begin{equation*}
	\left|(g_{\alpha}(\lambda)\lambda - 1)s_\beta(\lambda)\right| \leq \left|(g_{\alpha}(\lambda)\lambda - 1)\lambda^{\beta/2}\right|\left|\frac{s_\beta(\lambda)}{\lambda^{\beta/2}}\right| \leq c_2\alpha_n^{\beta/2}s_\beta(\alpha_n)/\alpha_n^{\beta/2} ,\qquad \forall \lambda\geq\alpha_n.
	\end{equation*}
	Thus,
	\begin{equation*}
		\begin{aligned}
			\|II_n\|^2 & \leq \left\|\left[g_{\alpha}(\hat K^*\hat K)\hat K^*\hat K - I\right]s_\beta(\hat K^*\hat K)\right\|^2\left\|\psi\right\|^2 \\
			& \leq \sup_{\lambda\leq\Lambda}\left|(g_{\alpha}(\lambda)\lambda - 1)s_\beta(\lambda)\right|^2C. \\
		\end{aligned}
	\end{equation*}
	
	Finally, under Assumption~\ref{as:regularization_schemes} (ii) with $r=0$
	\begin{equation*}
	\begin{aligned}
	\|III_n\|^2 & \leq \left\|g_{\alpha}(\hat K^*\hat K)\hat K^*\hat K - I\right\|^2 \left\|s_\beta(\hat K^*\hat K) - s_\beta(K^*K)\right\|^2\left\|\psi\right\|^2 \\
	& \leq C\sup_{\lambda\leq\Lambda}\left|(g_{\alpha}(\lambda)\lambda - 1)\right|^2\left\|s_\beta(\hat K^*\hat K) - s_\beta(K^*K)\right\|^2 \\
	& \leq Cc_2^2\left\|s_\beta(\hat K^*\hat K) - s_\beta(K^*K)\right\|^2.
	\end{aligned}
	\end{equation*}
	The result follows by applying Lemma~\ref{prop:delta_method} which appears at the end of this section.
\end{proof}

The following theorem establishes the uniform convergence rates for a broad class of regularized estimators:
\begin{theorem}\label{thm:Linf_rates_general}	
	Suppose that Assumptions~\ref{as:general_source}, \ref{as:regularization_schemes}, \ref{as:rates4}, and \ref{as:rates_2-infty} are satisfied with $\varphi_1 = s_\beta(K^*K)K^*\psi$ and $\beta_0\geq \beta$. Then, in the mildly ill-posed case, 
	\begin{equation*}
		\left\|\hat \varphi - \varphi_1\right\|_\infty = O_P\left(\frac{\delta_{n}^{1/2}}{\alpha_n} + \frac{1}{\alpha_n^{1/2}}\left(\rho_{2n}^{1/2} + \rho_{1n}^{\frac{\beta\wedge1}{2}}\left(1 + \one_{\beta=1}\log\rho_{1n}^{-1} \right) \right) + \alpha_n^{\beta/2}\right),
	\end{equation*}
	and in the severely ill-posed case,
	\begin{equation*}
		\left\|\hat \varphi - \varphi_1\right\|_\infty = O_P\left(\frac{\delta_n^{1/2}}{\alpha_n} + \frac{1}{\alpha_n^{1/2}}\left(\rho_{2n}^{1/2} + \log^{-\beta/2}\rho_{1n}^{-1} \right) + \log^{-\beta/2}\alpha_n^{-1}\right).
	\end{equation*}
\end{theorem}

\begin{proof}
	We start with the decomposition 
	\begin{equation*}
		\hat{\varphi} - \varphi_1 = I_n + II_n + III_n,
	\end{equation*}
	where
	\begin{equation*}
	\begin{aligned}
		I_n & = g_{\alpha}(\hat K^*\hat K)\hat K^*(\hat r - \hat K\varphi_1), \\
		II_n & = \left[g_{\alpha}(\hat K^*\hat K)\hat K^*\hat K - I\right]s_\beta(\hat K^*\hat K)\hat K^*\psi, \\
		III_n & = \left[g_{\alpha}(\hat K^*\hat K)\hat K^*\hat K - I\right]\left\{s_\beta(K^*K)K^* - s_\beta(\hat K^*\hat K)\hat K^*\right\}\psi.
	\end{aligned}
	\end{equation*}
	We proceed by bounding each term individually. For the first term, we have
	\begin{equation*}
	\begin{aligned}
	\|I_n\|_\infty & = \left\|\hat K^*g_{\alpha}(\hat K\hat K^*)(\hat r - \hat K\varphi_1)\right\| \\
	& \leq \|\hat K^*\|_{2,\infty}\sup_{\lambda\leq\Lambda}|g_\alpha(\lambda)|\left\|\hat r - \hat K\varphi_1\right\| \\
	& = O_P\left(\frac{\delta_n^{1/2}}{\alpha_n}\right),
	\end{aligned}
	\end{equation*}
	where the last line follows under Assumptions~\ref{as:rates_2-infty} and \ref{as:regularization_schemes} (iii).
	
	For the second term, note that
	\begin{equation*}
	\begin{aligned}
	\|II_n\|_\infty & = \left\|\hat K^*\left[g_{\alpha}(\hat K\hat K^*)\hat K\hat K^* - I\right]s_\beta(\hat K\hat K^*)\psi\right\| \\
	& \leq \|\hat K^*\|_{2,\infty}\sup_{\lambda\leq\Lambda}|(g_\alpha(\lambda)\lambda - 1)s_\beta(\lambda)|\|\psi\| \\
	& = O_P(s_\beta(\alpha_n)),
	\end{aligned}
	\end{equation*}
	where the last line follows by the same argument as in the proof of Theorem~\ref{thm:L2_rates_general} under Assumptions~\ref{as:rates_2-infty} and \ref{as:regularization_schemes} (ii). 
	
	Next, for the third term,
	\begin{equation*}
	\|III_n\|_\infty \leq \left\|g_{\alpha}(\hat K^*\hat K)\hat K^*\hat K - I\right\|_\infty\left\|s_\beta(K^*K)K^* - s_\beta(\hat K^*\hat K)\hat K^*\right\|_{2,\infty} \|\psi\|.
	\end{equation*}
	Under Assumptions~\ref{as:regularization_schemes} (i) and \ref{as:rates_2-infty}
	\begin{equation*}
	\begin{aligned}
	\left\|g_{\alpha}(\hat K^*\hat K)\hat K^*\hat K - I\right\|_\infty & \leq \|\hat K^*\|_{2,\infty}\left\|g_{\alpha}(\hat K\hat K^*)\hat K\right\| + 1 \\
	& \leq\|\hat K^*\|_{2,\infty}\sup_{\lambda\leq \Lambda}\left|g_{\alpha}(\lambda)\lambda^{1/2}\right| + 1 \\
	& = O_P\left(\frac{1}{\alpha_n^{1/2}}\right).
	\end{aligned}
	\end{equation*}
	Finally, under Assumption~\ref{as:rates_2-infty}
	\begin{equation*}
	\begin{aligned}
		\left\|s_\beta(K^*K)K^* - s_\beta(\hat K^*\hat K)\hat K^*\right\|_{2,\infty} & = \left\|K^*s_\beta(KK^*) - \hat K^*s_\beta(\hat K\hat K^*)\right\|_{2,\infty} \\
		& \lesssim \left\|\hat K^* - K^*\right\|_{2,\infty} + \left\|s_\beta(\hat K^*\hat K) - s_\beta(K^*K)\right\| \\
	& \lesssim \rho_{2n}^{1/2} + \left\|s_\beta(\hat K^*\hat K) - s_\beta(K^*K)\right\|.
	\end{aligned}
	\end{equation*}
	The result now follows by applying Lemma~\ref{prop:delta_method} similar to the proof of Theorem~\ref{thm:L2_rates_general}.
\end{proof}

\begin{lemma}\label{prop:delta_method}
	Assume that Assumption~\ref{as:rates4} (iii) holds with $\rho_{1n}\sim n^{-c}$ for some $c>0$. Then
	\begin{equation*}
		\left\|(\hat K^*\hat K)^{\beta/2} - (K^*K)^{\beta/2}\right\| = O_P\left( \left(1 + \one_{\beta=1}\log \rho_{1n}^{-1} \right)\rho_{1n}^\frac{\beta\wedge 1}{2}\right)
	\end{equation*}
	and
	\begin{equation*}
	\left\|\log^{-\beta/2}(\hat K^*\hat K)^{-1} - \log^{-\beta/2}(K^*K)^{-1}\right\| = O_P\left(\log^{-\beta/2}\rho_{1n}^{-1}\right).
	\end{equation*}
\end{lemma}
\begin{proof}
	Using Lemma 3.2 from \cite{egger2005accelerated}, we have:
	\begin{equation*}
		\left\|(\hat K^*\hat K)^{\beta/2} - (K^*K)^{\beta/2}\right\| \lesssim \begin{cases}
			\|\hat K - K\|^\beta, & \beta<1 \\
			\|\hat K - K\|\left\{1 + \|\hat K\| + \|K\| + \log\|\hat K - K\|^{-1}\right\}, & \beta = 1 \\
			\|\hat K - K\|(\|\hat K\| + \|K\|)^{\beta/2}, & \beta>1.
		\end{cases}
	\end{equation*}
	The first statement follows because $\|\hat K - K\|=O_P(\rho_{1n}^{1/2})$, with $\rho_{1n}\to0$ as $n\to\infty$ under Assumption~\ref{as:rates4} (iii), and since $x\mapsto x\log(1/x)$ is strictly increasing near zero.
	
	For the second statement, by Theorem 4 in \cite{mathe2002moduli}, 
	\begin{equation*}
		\begin{aligned}
			\left\|\log^{-\beta/2}(\hat K^*\hat K)^{-1} - \log^{-\beta/2}(K^*K)^{-1}\right\| & = O_P\left(\log^{-\beta/2}\|\hat K^*\hat K - K^*K\|^{-1}\right) \\
			& = O_P\left(\log^{-\beta/2}\rho_{1n}^{-1}\right),
		\end{aligned}
	\end{equation*}
	where the last line uses Assumption~\ref{as:rates4} (iii) and the fact that $x\mapsto \log^{-\beta/2}(1/x)$ is strictly increasing near zero.
\end{proof}

\section{Proofs of Main Results}\label{app:proofs}
\begin{proof}[Proof of Theorem~\ref{thm:L2_rate}]
	Decompose
	\begin{equation*}
	\begin{aligned}
	\hat{\varphi} - \varphi_1 & = (\alpha_n I + \hat K^*\hat K)^{-1}\hat K^*(\hat r - \hat K\varphi_1) \\
	& + (\alpha_n I + \hat K^*\hat K)^{-1}\hat K^*\hat K\varphi_1 - (\alpha_n I + K^*K)^{-1}K^*K\varphi_1 \\
	& + \left((\alpha_n I + K^*K)^{-1}K^*K - I\right)\varphi_1 \\
	& \triangleq I_n + II_n + III_n.
	\end{aligned}
	\end{equation*}
	The term $III_n$ represents the regularization bias, which can be controlled under Assumption~\ref{as:source4} as follows:
	\begin{equation*}
	\begin{aligned}
		\|III_n\|^2 & = \left\|\alpha_n(\alpha_n I + K^*K)^{-1}\varphi_1\right\|^2 \\
		& \leq C\left\|\alpha_n(\alpha_n I + K^*K)^{-1}(K^*K)^{\beta/2}\right\|^2 \\
		& \leq C \sup_{\lambda\in[0,\|K\|^2]}\left|\frac{\alpha_n\lambda^{\beta/2}}{\alpha_n+\lambda}\right|^2 \\
		& \leq C^{(2\beta - 3)\vee 1}\alpha_n^{\beta};
	\end{aligned}
	\end{equation*}
	see \cite{babii2021high} for more details. The first term $I_n$ is controlled under Assumption~\ref{as:rates4} (i)
	\begin{equation*}
	\begin{aligned}
	\E\|I_n\|^2 & \leq \E\left\|(\alpha_n I + \hat K^*\hat K)^{-1}\hat K^*\right\|^2\left\|\hat r - \hat K\varphi_1\right\|^2 \\
	& \leq \sup_{\lambda\geq 0}\left|\frac{\lambda^{1/2}}{\alpha_n + \lambda}\right|^2\E\left\|\hat r - \hat K\varphi_1\right\|^2 \\
	& \leq \frac{1}{4\alpha_n}\E\left\|\hat r - \hat K\varphi_1\right\|^2 \\
	& \leq \frac{C_1\delta_{n}}{4\alpha_n}.
	\end{aligned}
	\end{equation*}
	Next, decompose $II_n$ further as follows:
	\begin{equation*}
	\begin{aligned}
	II_n & = -\left[ \alpha_n (\alpha_n I + \hat K^*\hat K)^{-1} - \alpha_n(\alpha_n I + K^*K)^{-1}\right]\varphi_1 \\
	& = -(\alpha_n I + \hat K^*\hat K)^{-1}\alpha_n\left[K^*K - \hat K^*\hat K\right](\alpha_n I + K^*K)^{-1}\varphi_1 \\
	& = (\alpha_n I + \hat K^*\hat K)^{-1}\hat K^*\left[\hat K - K\right]\alpha_n(\alpha_n I + K^*K)^{-1}\varphi_1 \\
	& \qquad + (\alpha_n I + \hat K^*\hat K)^{-1}\left[\hat K^* - K^*\right]\alpha_n K(\alpha_n I + K^*K)^{-1}\varphi_1 \\
	& = II^a_n + II^b_n. \\
	\end{aligned}
	\end{equation*}
	Now, using previous results and Assumption~\ref{as:rates4} (iii), we have
	\begin{equation*}
	\begin{aligned}
	\E\|II^a_n\|^2 & = \E\left\|(\alpha_n I + \hat K^*\hat K)^{-1}\hat K^*\left[\hat K - K\right]\alpha_n(\alpha_n I + K^*K)^{-1}\varphi_1\right\|^2 \\
	& \leq \E\left\|(\alpha_n I + \hat K^*\hat K)^{-1}\hat K^*\right\|^2\left\|\hat K - K\right\|^2\left\|\alpha_n(\alpha_n I + K^*K)^{-1}\varphi_1\right\|^2 \\
	& \leq \sup_{\lambda\geq 0}\left|\frac{\lambda^{1/2}}{\alpha_n + \lambda}\right|^2\E\left\|\hat K - K\right\|^2C^{(2\beta - 3)\vee 1}\alpha_n^{\beta\wedge 2} \\ 
	& \leq \frac{C_2\rho_{1n}}{4\alpha_n}C^{(2\beta - 3)\vee 1}\alpha_n^{\beta\wedge 2}.
	\end{aligned}
	\end{equation*}
	For $II_n^b$, we obtain
	\begin{equation*}
	\begin{aligned}
	\E\|II^b_n\|^2 & = \E\left\|(\alpha_n I + \hat K^*\hat K)^{-1}\left[\hat K^* - K^*\right]\alpha_n K(\alpha_n I + K^*K)^{-1}\varphi_1\right\|^2 \\
	& \leq \E\left\|(\alpha_n I + \hat K^*\hat K)^{-1}\right\|^2\left\|\hat K^* - K^*\right\|^2C\left\|\alpha_n K(\alpha_n I + K^*K)^{-1}(K^*K)^{\beta/2}\right\|^2 \\
	& \leq \sup_{\lambda\geq 0}\left|\frac{1}{\alpha_n + \lambda}\right|^2\E\left\|\hat K^* - K^*\right\|^2C\sup_{\lambda\in[0,C^2]}\left|\frac{\alpha_n\lambda^{(\beta+1)/2}}{\alpha_n + \lambda}\right|^2\\ 
	& \leq \frac{C_{2}\rho_{1n}}{\alpha_n^2}C^{(2\beta-1)\vee 1}\alpha_n^{(\beta+1)\wedge 2},
	\end{aligned}
	\end{equation*}
	where the third {line} uses $\|\hat K^* - K^*\|=\|\hat K - K\|$. Combining all estimates, we obtain the final result.
\end{proof}

\begin{proof}[Proof of the Theorem~\ref{thm:Linf_rate}]
	Consider the same decomposition as in the proof of Theorem~\ref{thm:L2_rate}. Given that $\varphi_1=(K^*K)^{\beta/2} K^*\psi$, the bias term is handled similarly to the identified case (see \cite{babii2020honest}, Proposition 3.1):
	\begin{equation*}
	\begin{aligned}
	\|III_n\|_\infty & = \left\|\alpha_nK^*(\alpha_nI + KK^*)^{-1}(KK^*)^{\frac{\beta}{2}}\psi\right\|_\infty \\
	& \leq \left\|K^*\right\|_{2,\infty}\left\|\alpha_n(\alpha_n I + KK^*)^{-1}(KK^*)^{\frac{\beta}{2}}\right\|\left\|\psi\right\| \\
	& =O(\alpha_n^{\beta/2}).
	\end{aligned}
	\end{equation*}
	Next, using the Cauchy-Schwartz inequality, Assumption~\ref{as:rates4} (iii), and Assumption~\ref{as:rates_2-infty}, we can bound the first term as follows:
	\begin{equation*}
	\begin{aligned}
	\E\|I_n\|_\infty & = \E\left\|\hat K^*(\alpha_n I + \hat K\hat K^*)^{-1}(\hat r - \hat K\varphi_1)\right\|_\infty \\
	& \leq \E\|\hat K^*\|_{2,\infty}\left\|(\alpha_n I + \hat K\hat K^*)^{-1}\right\|\left\|(\hat r - \hat K\varphi_1)\right\| \\
	& \leq \frac{1}{\alpha_n}\left(\|K^*\|_{2,\infty}\E\left\|(\hat r - \hat K\varphi_1)\right\| + \E\left\|\hat K^* - K^*\right\|_{2,\infty}\left\|(\hat r - \hat K\varphi_1)\right\| \right)\\
	& \leq C_1^{1/2}\left(C_3 + C_3^{1/2}\rho_{2n}^{1/2}\right)\frac{\delta_{n}^{1/2}}{\alpha_n}.
	\end{aligned}
	\end{equation*}
	
	The second term can be decomposed into two parts, $II_n^a$ and $II_n^b$, similarly to the proof of Theorem~\ref{thm:L2_rate}, and we bound each part separately. First, for $II_n^a$:
	\begin{equation*}
	\begin{aligned}
	\E\|II_n^a\|_\infty & = \left\|\hat K^*(\alpha_n I + \hat K\hat K^*)^{-1}\left[\hat K - K\right]\alpha_n(\alpha_n I + K^*K)^{-1}\varphi_1\right\|_\infty \\
	& \leq \E\left\|\hat K^*\right\|_{2,\infty}\left\|(\alpha_n I + \hat K\hat K^*)^{-1}\right\|\left\|\hat K - K\right\|\left\|\alpha_n(\alpha_n I + K^*K)^{-1}\varphi_1\right\| \\
	& \leq \frac{1}{\alpha_n}\left(C_3 + \left(\E\left\|\hat K^* - K^*\right\|_{2,\infty}^2\right)^{1/2}\right)\left(\E\left\|\hat K - K\right\|^2\right)^{1/2} C^{(\beta-1.5)\vee 0.5}\alpha_n^{\beta/2} \\
	& \leq \left(C_3 + C_3^{1/2}\rho_{2n}^{1/2}\right)\frac{C_2^{1/2}\rho_{1n}^{1/2}}{\alpha_n}C^{(\beta-1.5)\vee 0.5}\alpha_n^{\beta/2}.
	\end{aligned}
	\end{equation*}	
	For the second part, $II_n^b$, applying Assumption~\ref{as:rates_2-infty}, and using the inequality in \cite{babii2020honest}, Lemma A.4.1 (see also \cite{nair2009linear}, Problem 5.8), we get:
	\begin{equation*}
	\begin{aligned}
		\E\|II_n^b\|_\infty & = \E\left\|(\alpha_n I + \hat K^*\hat K)^{-1}\left[\hat K^* - K^*\right]\alpha_n K(\alpha_n I + K^*K)^{-1}\varphi_1\right\|_\infty \\
		& = \E\left\|(\alpha_n I + \hat K^*\hat K)^{-1}\right\|_\infty\left\|\hat K^* - K^*\right\|_{2,\infty}\left\|\alpha_n K(\alpha_n I + K^*K)^{-1}\varphi_1\right\| \\
		& = \frac{1}{2\alpha_n^{3/2}}\E\left(\|\hat K^*\|_{2,\infty}  + 2\alpha_n^{1/2}\right)\left\|\hat K^* - K^*\right\|_{2,\infty}C^{(\beta-1/2)\vee 1/2}\alpha_n^{\frac{\beta+1}{2}\wedge 1} \\
		& \leq \frac{1}{2\alpha_n^{3/2}}\left(C_3 + C_3^{1/2}\rho_{2n}^{1/2} + 2\alpha_n^{1/2}\right)C_3^{1/2}\rho_{2n}^{1/2}C^{(\beta-1/2)\vee 1/2}\alpha_n^{\frac{\beta+1}{2}\wedge 1}.
	\end{aligned}
	\end{equation*}
	Combining all these estimates yields the final result.
\end{proof}

The following proposition provides low-level conditions for Assumptions~\ref{as:rates4} and \ref{as:rates_2-infty} in the context of nonparametric IV regression estimated with kernel smoothing. Let $C^s_M$ denote the H\"{o}lder class.
\begin{proposition}\label{prop:npiv_components}
	Suppose that (i) $(Y_i,Z_i,W_i)_{i=1}^n$ are i.i.d. and $\E|Y_1|^2\leq C\infty$; (ii) $f_{ZW}\in C^s_M$; (iii) kernel functions $K_z:\R^p\to\R$ and $K_w:\R^q\to\R$ are such that for $l\in\{w,z\}$, $K_l\in L_1\cap L_2$, $\int K_l(u)\dx u = 1$, $\int\|u\|^sK_l(u)\dx u<\infty$, and $\int u^kK_l(u)\dx u=0$ for all multindices $|k|=1,\dots,\lfloor s\rfloor$. Then
	\begin{equation*}
	\E\left\|\hat r - \hat K\varphi_1\right\|^2 = O\left(\frac{1}{nh^q_n} + h^{2s}_n\right)\qquad \mathrm{and}\qquad \E\left\|\hat K - K\right\|^2 = O\left(\frac{1}{nh^{p+q}_n} + h^{2s}_n\right),
	\end{equation*}
	where the constants do not depend on $(K,\varphi)$.
\end{proposition}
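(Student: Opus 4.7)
The plan is to reduce all three bounds to standard bias–variance decompositions for kernel density type estimators, using the linearity of $\hat K$ and a convolution representation. A key observation is that
$$
(\hat K\phi)(w) = \frac{1}{nh_n^q}\sum_{i=1}^n K_w\!\left(h_n^{-1}(W_i-w)\right)\tilde\phi(Z_i),
$$
where $\tilde\phi = \phi * K_{z,h_n}$ with $K_{z,h_n}(u) = h_n^{-p}K_z(h_n^{-1}u)$; this follows from the change of variables inside $\int\phi(z)K_z(h_n^{-1}(Z_i-z))\dx z$. Similarly $\hat r$ has the same shape with $\tilde\phi(Z_i)$ replaced by $Y_i$.

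For the first estimate, I would split $\hat K\phi - K\phi$ into its variance and bias parts. By Fubini and i.i.d.\ arguments, the integrated pointwise variance is at most $\frac{\|K_w\|_{L_2}^2}{nh_n^q}\E[\tilde\phi(Z)^2]$, and Young's inequality for convolutions together with $f_Z \in L_\infty$ gives $\E[\tilde\phi(Z)^2] \leq \|f_Z\|_\infty\|K_z\|_{L_1}^2\|\phi\|^2$. For the squared $L_2$-bias, Cauchy--Schwartz in the variable $z$ yields
$$
\|\E\hat K\phi - K\phi\|^2 \leq \|\phi\|^2\,\|\E\hat f_{ZW}-f_{ZW}\|_{L_2}^2,
$$
so the proof concludes via the standard Besov bound $\|\E\hat f_{ZW}-f_{ZW}\|_{L_2}=O(h_n^s)$ for $f_{ZW}\in B^s_{2,\infty}$ convolved against a product kernel of order $\lfloor s\rfloor$.

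The second statement is handled analogously. Writing $(\hat r - \hat K\varphi)(w)=\frac{1}{nh_n^q}\sum_{i}K_w(h_n^{-1}(W_i-w))\{Y_i - \tilde\varphi(Z_i)\}$ and using $r=K\varphi$, the bias decomposes as $(\E\hat r - r) - (\E\hat K\varphi - K\varphi)$, each summand being $O(h_n^s)$ in $L_2$ by the same Besov/kernel-order argument; the integrated variance is at most $\frac{\|K_w\|_{L_2}^2}{nh_n^q}\E[(Y-\tilde\varphi(Z))^2]$, bounded by a constant using $\E Y^2<\infty$ and $\E[\tilde\varphi(Z)^2]\lesssim \|\varphi\|^2$.

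The third bound is obtained by passing to the Hilbert--Schmidt norm: the integral kernel of $\hat K - K$ is exactly $\hat f_{ZW}-f_{ZW}$, so
$$
\|\hat K - K\|^2 \leq \|\hat K - K\|_{\mathrm{HS}}^2 = \|\hat f_{ZW}-f_{ZW}\|_{L_2}^2,
$$
and the standard MISE bound for a kernel density estimator with bandwidth $h_n$ on $\R^{p+q}$ with a product kernel of order $\lfloor s\rfloor$ yields $\E\|\hat f_{ZW}-f_{ZW}\|_{L_2}^2 = O\bigl(1/(nh_n^{p+q}) + h_n^{2s}\bigr)$. The main obstacle is purely bookkeeping: one must verify that the convolution with $K_z$ does not inflate $L^2$-norms (handled by Young's inequality), that Fubini applies uniformly in $w$, and that the $B^s_{2,\infty}$-bias bound is valid under the assumed kernel moment conditions. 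No genuinely new difficulty arises beyond the standard kernel-smoothing calculus.
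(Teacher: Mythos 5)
Your proposal follows essentially the same route as the paper: a bias--variance split with the $L_2$-bias controlled via Cauchy--Schwartz by $\|\E\hat f_{ZW}-f_{ZW}\|$ and the Besov-class kernel bias bound, an i.i.d.\ variance computation of order $(nh_n^q)^{-1}$, the triangle inequality through $r=K\varphi$ for the second claim, and domination of the operator norm by the Hilbert--Schmidt norm plus the standard MISE bound for $\hat f_{ZW}$ for the third. The only cosmetic difference is that you control $\E[\tilde\phi(Z)^2]$ via Young's inequality and $f_Z\in L_\infty$, whereas the paper bounds $\int f_Z\left|\phi\ast K_z\right|^2$ directly by a change of variables and Cauchy--Schwartz; this does not change the substance of the argument.
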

\begin{proof}
	For the first claim, note that
	\begin{equation*}
	\E\left\|\hat r - \hat K\varphi_1\right\|^2 \leq 2\E\left\|\hat r - r\right\|^2 + 2\E\left\|(\hat K - K)\varphi_1\right\|^2.
	\end{equation*}
	Decompose
	\begin{equation*}
	\E\left\|\hat r - r\right\|^2 = \E\left\|\hat r - \E\hat r\right\|^2 + \left\|\E\hat r - r\right\|^2.
	\end{equation*}
	Under the i.i.d. assumption,
	\begin{equation*}
	\begin{aligned}
	\E\left\|\hat r - \E\hat r\right\|^2 & = \E\left\|\frac{1}{nh^q_n}\sum_{i=1}^nY_iK_w\left(h^{-1}_n(W_i-w)\right) - \E\left[Y_ih^{-q}_nK_w\left(h^{-1}_n(W_i - w)\right)\right]\right\|^2 \\
	& = \frac{1}{n}\E\left\|Y_ih^{-q}_nK_w\left(h^{-1}_n(W_i-w)\right) - \E\left[Y_ih^{-q}_nK_w\left(h^{-1}_n(W_i-w)\right)\right]\right\|^2 \\
	& \leq \frac{1}{nh^q_n} \E |Y_1|^2\|K_w\|^2  \\
	& = O\left(\frac{1}{nh^{q}_n}\right).
	\end{aligned}
	\end{equation*}
	Applying the Cauchy-Schwartz inequality, we obtain
	\begin{equation*}
	\begin{aligned}
		\E\hat r - r & = \E\left[\varphi(Z_i)h^{-q}_nK_w\left(h^{-1}_n(W_i-w)\right)\right] - \int\varphi(z)f_{ZW}(z,w)\dx z \\
		& = \int\varphi(z)\left\{ [f_{ZW}\ast K_w	](z,w) - f_{ZW}(z,w)\right\}\dx z \\
		& \leq \|\varphi\|\left\|f_{ZW}\ast K_w - f_{ZW}\right\|,
	\end{aligned}
	\end{equation*}
	where $[f_{ZW}\ast K_{w,h}](z,w) = \int f_{ZW}(z,w')h_n^{-q}K_w\left(h_n^{-1}(w-w')\right)\dx w'$. Since $f_{ZW}\in C^s_M$, we obtain
	\begin{equation*}
	\left\|\E\hat r - r\right\| = O(h^s),
	\end{equation*}
	as shown in \cite{gine2015mathematical}, Proposition 4.3.8.  Consequently,
	\begin{equation*}
		\E\|\hat r - r\|^2 = O\left(\frac{1}{nh_n^q} + h_n^{2s}\right).
	\end{equation*}
	Next, decompose
	\begin{equation*}
		\begin{aligned}
			(\hat K\varphi_1 - K\varphi_1)(w) & \triangleq V_n(w) + B_n(w),
		\end{aligned}
	\end{equation*}
	where
	\begin{equation*}
		\begin{aligned}
			V_n & = \int\varphi_1(z)\left(\hat f_{ZW}(z,w) - \E\hat f_{ZW}(z,w)\right)\dx z, \\
			B_n & = \int\varphi_1(z)\left(\E\hat f_{ZW}(z,w) - f_{ZW}(z,w)\right)\dx z.
		\end{aligned}
	\end{equation*}
	By the Cauchy-Schwartz inequality,
	\begin{equation*}
		\|B_n\| \leq \|\varphi_1\|\left\|\E\hat f_{ZW} - f_{ZW}\right\|,
	\end{equation*}
	and the right side is of order $O(h_n^s)$ under the assumption $f_{ZW}\in C^s_M$; see \cite{gine2015mathematical}, p.404.
	
	Next, note that 
	\begin{equation*}
		V_n(w) = \frac{1}{nh_n^q}\sum_{i=1}^n\eta_{n,i}(w)
	\end{equation*}
	with 
	\begin{equation*}
		\eta_{n,i}(w) = K_w\left(h_n^{-1}(W_i - w)\right)\left[\varphi_1\ast K_z\right](Z_i) - \E\left[K\left(h_n^{-1}(W_i-w)\right)\left[\varphi_1\ast K_z\right](Z_i)\right],
	\end{equation*}
	where $\left[\varphi_1\ast K_z\right](Z_i) = \int \varphi_1(z)h_n^{-p}K_z\left(h_n^{-1}(Z_i - z)\right)\dx z$. Then
	\begin{equation*}
		\begin{aligned}
			\E\|V_n\|^2 & \leq \frac{1}{nh_n^{2q}}\int\int\int\left|K_w(h_n^{-1}(w' - w))\right|^2\left|\left[\varphi_1\ast K_z\right](z')\right|^2\dx wf_{ZW}(z',w')\dx w'\dx z'\\
			& = \frac{1}{nh_n^q}\|K_w\|^2\int \left|\left[\varphi_1\ast K_z\right](z)\right|^2 f_Z(z)\dx z \\
			& = O\left(\frac{1}{nh_n^q}\right),
		\end{aligned}
	\end{equation*}
	where the second line follows from a change of variables, and the last by $\|f_Z\|_\infty\leq C$ and Young's inequality. Combining all estimates, we establish the first claim.
	
	The second claim follows from
	\begin{equation*}
		\E\left\|\hat K - K\right\|^2 \leq \E\left\|\hat f_{ZW} - f_{ZW}\right\|^2,
	\end{equation*}
	and standard results on the $L_2$ error of kernel density estimators; see \cite{gine2015mathematical}, Chapter 5.
\end{proof}

\begin{proof}[Proof of Theorem~\ref{thm:inner_products_flir}]
	We first examine the distribution of $n\alpha_n\langle \hat\varphi - \varphi_1,\mu_0\rangle$. Define $b_n = \alpha_n(\alpha_n I + K^*K)^{-1}\varphi_1$ and observe that $(\alpha_nI+\hat K^*\hat K)^{-1}\hat K^*=\hat K^*(\alpha_nI+\hat K\hat K^*)^{-1}$. Then, as in the proof of Theorem~\ref{thm:L2_rate}, decompose
	\begin{equation*}
		\begin{aligned}
			\left\langle\hat\varphi - \varphi_1,\mu_0\right\rangle & = \left\langle\hat K^*(\alpha_nI + KK^*)^{-1}(\hat r - \hat K\varphi_1),\mu_0\right\rangle \\
			& \qquad + \left\langle\hat K^*\left((\alpha_nI + \hat K\hat K^*)^{-1} - (\alpha_nI + KK^*)^{-1}\right) (\hat r - \hat K\varphi_1),\mu_0\right\rangle \\
			& \qquad + \left\langle (\alpha_n I + \hat K^*\hat K)^{-1}\hat K^*(\hat K - K)b_n,\mu_0 \right\rangle \\
			& \qquad + \left\langle (\alpha_n I + \hat K^*\hat K)^{-1}(\hat K^* - K^*)Kb_n,\mu_0 \right\rangle \\
			& \qquad + \langle b_n,\mu_0\rangle \\
			& \triangleq I_n + II_n + III_n + IV_n + V_n.
		\end{aligned}
	\end{equation*}
	We will show that
	\begin{equation*}
		I_n = \left\langle \alpha_n(\alpha_n I + KK^*)^{-1}(\hat r - \hat K\varphi_1),\hat K\mu_0\right\rangle
	\end{equation*}
	is the leading term in this decomposition and that all other terms are asymptotically negligible. Since $\mu_0\in\mathcal{N}(K)$, we have
	\begin{equation}\label{eq:mu_0}
		(\alpha_nI + K^*K)^{-1}\mu_0 = \frac{1}{\alpha_n}\mu_0.
	\end{equation}
	Thus,
	\begin{equation*}
		\begin{aligned}
			II_n & = \left\langle (\alpha_n I + \hat K\hat K^*)^{-1}(KK^* - \hat K\hat K^*)(\alpha_nI + KK^*)^{-1}(\hat r - \hat K\varphi_1),\hat K\mu_0\right\rangle\\
			& = \left\langle(\alpha_n I + \hat K\hat K^*)^{-1}\hat K(K^*-\hat K^*)(\alpha_nI + KK^*)^{-1}(\hat r - \hat K\varphi_1),\hat K\mu_0 \right\rangle \\
			& \qquad + \left\langle(\alpha_n I + \hat K\hat K^*)^{-1}(K-\hat K)K^*(\alpha_nI + KK^*)^{-1}(\hat r - \hat K\varphi_1),\hat K\mu_0 \right\rangle. \\
		\end{aligned}
	\end{equation*}
	Using the Cauchy-Schwartz inequality and similar computations to those in the proof of Theorem~\ref{thm:L2_rate}
	\begin{equation*}
		\begin{aligned}
			II_n & \leq \left\|(\alpha_n I + \hat K\hat K^*)^{-1}\hat K\right\|\left\|K^* - \hat K^*\right\|\left\|(\alpha_nI+KK^*)^{-1}\right\|\left\|\hat r - \hat K\varphi_1\right\| \left\|\hat K\mu_0\right\| \\
			& \quad + \left\|(\alpha_n I + \hat K\hat K^*)^{-1}\right\|\left\|K - \hat K\right\|\left\|K^*(\alpha_nI+KK^*)^{-1}\right\|\left\|\hat r - \hat K\varphi_1\right\| \left\|(\hat K - K)\mu_0\right\| \\
			& \leq \frac{1}{\alpha_n^{3/2}}\left\|\hat K - K\right\|^2\left\|\hat r - \hat K\varphi_1\right\|\|\mu_0\|.
		\end{aligned}
	\end{equation*}	
	Next, under Assumption~\ref{as:source4} from the proof of Theorem~\ref{thm:L2_rate}, we also know that $\|b_n\| = O(\alpha_n^{(\beta/2)\wedge 1})$ and $\|Kb_n\| = O(\alpha_n^{((\beta+1)/2)\wedge 1})$, so
	\begin{equation*}
		\begin{aligned}
			III_n & \leq \left\|(\alpha_n I + \hat K^*\hat K)^{-1}\hat K^*\right\|\left\|\hat K- K\right\| \|b_n\| \|\mu_0\| \\
			& \lesssim \frac{1}{\alpha_n^{1/2}}\left\|\hat K - K\right\| \alpha_n^{\frac{\beta}{2}\wedge 1}
		\end{aligned}
	\end{equation*}
	and
	\begin{equation*}
		\begin{aligned}
			IV_n & \leq \left\|(\alpha_n I + \hat K^*\hat K)^{-1}\right\|\left\|\hat K^* - K^*\right\|\|Kb_n\|\|\mu_0\| \\
			& \lesssim \frac{1}{\alpha_n^{1/2}}\left\|\hat K - K\right\|\alpha_n^{\frac{\beta\wedge 1}{2}}.
		\end{aligned}
	\end{equation*}	
	Lastly, the bias term $\langle b_n,\mu_0\rangle$ is zero due to equation~(\ref{eq:mu_0}) and the orthogonality between $\varphi_1$ and $\mu_0$
	\begin{equation*}
		\langle b_n,\mu_0\rangle = \left\langle\varphi_1,\alpha_n(\alpha_nI+K^*K)^{-1}\mu_0\right\rangle = \langle \varphi_1,\mu_0\rangle = 0.
	\end{equation*}
	It follows from the discussion in Section~\ref{sec:rates} under Assumption~\ref{as:data4} that
	\begin{equation*}
		\left\|\hat K - K\right\| = O_P\left(\frac{1}{n^{1/2}}\right)\quad \text{and}\quad \left\|\hat r - \hat K\varphi_1\right\| = O_P\left(\frac{1}{n^{1/2}}\right).
	\end{equation*}
	Thus, since $n\alpha_n^{1 + \beta\wedge 1}\to 0$ and $n\alpha_n\to\infty$ under Assumption~\ref{as:tuning_inner},  
	\begin{equation*}
		\begin{aligned}
			n\alpha_n\langle\hat{\varphi} - \varphi_1,\mu_0\rangle & = n\alpha_n\left\langle(\alpha_n I + KK^*)^{-1}(\hat r - \hat K\varphi_1),\hat K\mu_0 \right\rangle + o_P(1) \\
			& \triangleq S_n + o_P(1),
		\end{aligned}
	\end{equation*}
	where
	\begin{equation*}
		S_n = n\alpha_n\left\langle(\alpha_n I + KK^*)^{-1}(\hat r - \hat K\varphi_1),\hat K\mu_0\right\rangle.
	\end{equation*}
	Next, decompose $S_n = S_n^0+S_n^1$ with
	\begin{equation*}
		\begin{aligned}
			S_n^0  & = \frac{1}{n}\sum_{i,j=1}^n(Y_i - \langle Z_i,\varphi_1))\left\langle Z_j, \mu_0\right\rangle\left\langle\alpha_n(\alpha_n I + KK^*)^{-1}W_i^0,W_j^0\right\rangle, \\
			S_n^1 & = \frac{1}{n}\sum_{i,j=1}^n(Y_i - \langle Z_i,\varphi_1\rangle)\langle Z_j,\mu_0\rangle \left\langle\alpha_n(\alpha_nI + KK^*)^{-1}W_i^1,W_j^1 \right\rangle.
		\end{aligned}
	\end{equation*}
	Since $W_i^0\in\mathcal{N}(K^*)$, we have $(\alpha_nI+KK^*)^{-1}W_i^0=\frac{1}{\alpha_n}W_i^0$. Using this fact, decompose further $S_n^0 \triangleq \zeta_n^0 + \mathbf{U}_n^0$ with
	\begin{equation*}
		\begin{aligned}
			\zeta_n^0 & = \frac{1}{n}\sum_{i=1}^n(Y_i - \langle Z_i,\varphi_1\rangle)\langle Z_i,\mu_0\rangle\left\|W_i^0\right\|^2, \\
			\mathbf{U}_n^0 & = \frac{1}{n}\sum_{i<j}\left\{\langle Z_i,\mu_0\rangle (Y_j - \langle Z_j,\varphi_0\rangle) + \langle Z_j,\mu_0\rangle(Y_i - \langle Z_i,\mu_0\rangle) \right\}\left\langle W_i^0,W_j^0\right\rangle.
		\end{aligned}
	\end{equation*}	
	Under Assumption~\ref{as:data4} by the strong law of large numbers
	\begin{equation*}
		\zeta_n^0 \xrightarrow{a.s.} \E\left[\left\|W^0\right\|^2 (Y - \langle Z,\varphi_1\rangle)\langle Z,\mu_0\rangle\right].
	\end{equation*}
	Next, note that $W^0=P_0W$ and $W^1=(I-P_0)W$, where $P_0$ is the projection operator on $\mathcal{N}(K^*)$. Since projection is a bounded linear operator, it commutes with the expectation, cf., \cite{bosq2000linear}, p.29, whence $\E\left[W^0\langle Z,\mu_0\rangle\right] = P_0K\mu_0 = 0$ and $\E\left[W^0(Y - \langle Z,\varphi_1\rangle )\right] = P_0\E[WU] + P_0K\varphi_0 = 0$. Therefore, $\mathbf{U}_n^0$ is a centered degenerate {$U$-statistic} with the kernel function $h$. Under Assumption~\ref{as:data4} by the {CLT for degenerate $U$-statistics}, see \cite{gregory1977large},
	\begin{equation*}
		\mathbf{U}_n^0 \xrightarrow{d} \sum_{j=1}^\infty \lambda_j(\chi_{j}^2 - 1).
	\end{equation*}	
	It remains to show that $S_n^1=o_P(1)$. To that end decompose $S_n^1 = \zeta_n^1 + \mathbf{U}_n^1$ with
	\begin{equation*}
		\begin{aligned}
			\zeta_n^1 & = \frac{1}{n}\sum_{i=1}^n(Y_i - \langle Z_i,\varphi_1\rangle)\langle Z_i,\mu_0\rangle\left\langle \alpha_n(\alpha_nI + KK^*)^{-1}W_i^1,W_i^1\right\rangle, \\
			\mathbf{U}_n^1 & = \frac{1}{n}\sum_{i\ne j}(Y_i - \langle Z_i,\varphi_1\rangle)\langle Z_j,\mu_0\rangle \left\langle\alpha_n(\alpha_nI + KK^*)^{-1}W_i^1,W_j^1\right\rangle.
		\end{aligned}
	\end{equation*}
	It follows from \cite{bakushinskii1967general} that $\left\|\alpha_n(\alpha_nI + KK^*)^{-1}W^1\right\| = o(1)$. Then under Assumption~\ref{as:data4} by the dominated convergence theorem
	\begin{equation*}
		\begin{aligned}
			\E\left|\zeta_n^1\right| & \leq \|\mu_0\|\E\left[\left(|U| + \|Z\|\|\varphi_0\|\right)\|Z\| \left\|\alpha_n(\alpha_nI + KK^*)^{-1}W^1\right\|\right] \\
			& \lesssim  \E\left[(\|UZ\| + \|Z\|^2)\|W\|\left\|\alpha_n(\alpha_nI + KK^*)^{-1}W^1\right\| \right] \\
			& = o(1),
		\end{aligned}
	\end{equation*}
	whence by Markov's inequality $\zeta_n^1=o_P(1)$. Lastly, note that
	\begin{equation*}
		\begin{aligned}
			\mathbf{U}_n^1 = \frac{1}{n}\sum_{i<j} & \left\{(Y_i - \langle Z_i,\varphi_1\rangle)\langle Z_j,\mu_0\rangle \left\langle\alpha_n(\alpha_nI + KK^*)^{-1}W_i^1,W_j^1\right\rangle\right. \\
			& \left.+ (Y_j - \langle Z_j,\varphi_1\rangle)\langle Z_i,\mu_0\rangle \left\langle\alpha_n(\alpha_nI + KK^*)^{-1}W_j^1,W_i^1\right\rangle\right\}
		\end{aligned}
	\end{equation*}
	is a centered degenerate {U-statistic}. Then by the moment inequality in \cite{korolyuk2013theory}, Theorem 2.1.3,
	\begin{equation*}
		\begin{aligned}
			\E\left|\mathbf{U}_n^1\right|^2 & \leq 2^{-1}\E\left|(U_1+\langle Z_1,\varphi_0\rangle)\langle Z_2,\mu_0\rangle \left\langle\alpha_n(\alpha_nI + KK^*)^{-1}W_1^1,W_2^1\right\rangle\right|^2 \\
			& \quad + 2^{-1}\E\left|(U_2+\langle Z_2,\varphi_0\rangle)\langle Z_1,\mu_0\rangle \left\langle\alpha_n(\alpha_nI + KK^*)^{-1}W_2^1,W_1^1\right\rangle\right|^2 \\
			& \lesssim \E\left[\|Z\|^2\left\|\alpha_n(\alpha_nI + KK^*)^{-1}W^1\right\|^2\right] = o(1),
		\end{aligned}
	\end{equation*}
	where the last line follows under Assumptions~\ref{as:data4} and \ref{as:tuning_inner}, and previous discussions. This shows that
	\begin{equation*}
		n\alpha_n\langle \hat\varphi - \varphi_1,\mu_0\rangle \xrightarrow{d} \E\left[\|W\|^2(Y - \langle Z,\varphi_1\rangle)\langle Z,\mu_0\rangle\right] + \sum_{j\geq 1}\lambda_j(\chi_{j}^2-1).
	\end{equation*}
	
	Next, we focus on the distribution of $\pi_n\langle\hat\varphi - \varphi_1,\mu_1\rangle$. Decompose
	\begin{equation*}
		\begin{aligned}
			\left\langle\hat\varphi - \varphi_1,\mu_1\right\rangle & = \left\langle(\alpha_nI + K^*K)^{-1}K^*(\hat r - \hat K\varphi_1),\mu_1\right\rangle \\
			& \qquad + \left\langle\left\{(\alpha_nI + \hat K^*\hat K)^{-1} - (\alpha_nI + K^*K)^{-1}\right\}\hat K^*(\hat r - \hat K\varphi_1),\mu_1\right\rangle \\
			& \qquad + \left\langle(\alpha_nI + K^*K)^{-1}(\hat K^* - K^*)(\hat r - \hat K\varphi_1),\mu_1\right\rangle \\
			& \qquad + \left\langle(\alpha_nI + \hat K^*\hat K)^{-1}\hat K^*\hat K\varphi_1 - (\alpha_nI + K^*K)^{-1}K^*K\varphi_1,\mu_1\right\rangle \\
			& \qquad + \langle b_n,\mu_1\rangle \\
			& \triangleq I_n' + II_n' + III_n'+ IV_n' + \langle b_n,\mu_1\rangle.
		\end{aligned}
	\end{equation*}	
	Put $\eta_{n} = (Y-\langle Z,\varphi_1\rangle)(\alpha_nI + K^*K)^{-1}K^*W$ and note that
	\begin{equation*}
		\begin{aligned}
			\Var(\langle\eta_n,\mu_1\rangle) & = \E\left|\langle (Y-\langle Z,\varphi_1)W,K(\alpha_n I + K^*K)^{-1}\mu_1\rangle\right|^2 \\
			& = \langle\Sigma K(\alpha_n I + K^*K)^{-1}\mu_1,K(\alpha_n I + K^*K)^{-1}\mu_1 \rangle \\
			& = \|\Sigma^{1/2}K(\alpha_n I + K^*K)^{-1}\mu_1\|^2.
		\end{aligned}
	\end{equation*}
	Suppose that for all $\epsilon>0$, the following Lindeberg's condition is satisfied
	\begin{equation}\label{eq:lindeberg}
		\lim_{n\to\infty}\frac{\pi_n^2}{n}\E\left[\left|\langle\eta_{n},\mu_1\rangle\right|^2\one_{\left\{\pi_n\left|\langle\eta_{n},\mu_1\rangle\right|\geq \epsilon n\right\}}\right] = 0
	\end{equation}
	with $\pi_n = n^{1/2}\left\|\Sigma^{1/2}K(\alpha_n I + K^*K)^{-1}\mu_1\right\|^{-1}$. Then by the Lindeberg-Feller central limit theorem 
	\begin{equation*}
		\begin{aligned}
			\pi_nI_n' & = \frac{\pi_n}{n}\sum_{i=1}^n(U_i + \langle Z_i,\varphi_0\rangle)\left\langle(\alpha_nI + K^*K)^{-1}K^*W_i,\mu_1\right\rangle \\
			& \xrightarrow{d} N(0,1).
		\end{aligned}
	\end{equation*}
	and the result follows provided that all other terms are asymptotically negligible. To see that Lindeberg's condition in equation~(\ref{eq:lindeberg}) is satisfied, note that for every $\delta>0$,
	\begin{equation*}
		\E\left[\left|\langle\eta_{n},\mu_1\rangle\right|^2\one_{\left\{\pi_n\left|\langle\eta_{n},\mu_1\rangle\right|\geq \epsilon n\right\}}\right] \leq \frac{\pi_n^\delta}{\epsilon^\delta n^{\delta}}\E\left|\langle\eta_{n},\mu_1\rangle\right|^{2+\delta}
	\end{equation*}
	and that $\pi_n\sim n^{-c}$ with $c\in(0,1/2]$ depending on the mapping properties of operators $K$ and $\Sigma$. Therefore, the Lindeberg condition is satisfied provided that $\E\left|\langle\eta_{n},\mu_1\rangle\right|^{2+\delta} = O(1)$. This is easily verified under Assumption~\ref{as:data4} since
	\begin{equation*}
		\begin{aligned}
			\left|\langle\eta_{n},\mu_1\rangle\right| & \lesssim \left|U + \langle Z,\varphi_0\rangle\right| \left\|(K^*K)^{\tilde \gamma}(\alpha_n I + K^*K)^{-1}K^*(K^*K)^{\gamma}\right\| \\
			& \lesssim |U| + |\langle Z,\varphi_0\rangle|.
		\end{aligned}
	\end{equation*}
	
	Therefore, it remains to show that all other terms normalized with $\pi_n$ are asymptotically negligible. For $II_n'$, by the Cauchy-Schwartz inequality
	\begin{equation*}
		\begin{aligned}
			II_n' & = \left\langle\frac{1}{n}\sum_{i=1}^nW_i(U_i + \langle Z_i,\varphi_0\rangle),\hat K^*\left((\alpha_nI + \hat K^*\hat K)^{-1} - (\alpha_nI + K^*K)^{-1}\right)\mu_1\right\rangle \\
			& \leq \left\|\frac{1}{n}\sum_{i=1}^nW_i(U_i+\langle Z_i,\varphi_0\rangle)\right\|\left\|\hat K^*(\alpha_nI + \hat K^*\hat K)^{-1}(\hat K^*\hat K - K^*K)(\alpha_nI + K^*K)^{-1}\mu_1\right\|.
		\end{aligned}
	\end{equation*}
	
	Since $\mu_1\in\mathcal{R}\left[(K^*K)^{\gamma}\right]$, there exists some $\psi\in \mathcal{E}$ such that $\mu_1 = (K^*K)^{\gamma}\psi$ and so
	\begin{equation*}
		\begin{aligned}
			II_n' & \lesssim_P n^{-1/2}\left\|\hat K^*(\alpha_nI + \hat K^*\hat K)^{-1}\hat K^*\right\|\left\|\hat K - K\right\|\left\|(\alpha_nI + K^*K)^{-1}(K^*K)^\gamma\psi\right\| \\
			& \quad + n^{-1/2}\left\|\hat K^*(\alpha_nI + \hat K^*\hat K)^{-1}\right\|\left\|\hat K^* - K^*\right\|\left\|K(\alpha_nI + K^*K)^{-1}(K^*K)^\gamma\psi\right\| \\
			& \lesssim_P n^{-1}\left\|(\alpha_nI + K^*K)^{-1}(K^*K)^\gamma\psi\right\| + n^{-1}\alpha_n^{-1/2}\left\|K(\alpha_nI + K^*K)^{-1}(K^*K)^\gamma\psi\right\| \\
			& \lesssim_P n^{-1}\alpha_n^{\gamma\wedge 1 - 1} + n^{-1}\alpha_n^{\gamma\wedge 1/2-1} = o_P(\pi_n^{-1}),
		\end{aligned}
	\end{equation*}
	where the last line follows under Assumption~\ref{as:tuning_inner}. Similarly,
	\begin{equation*}
		\begin{aligned}
			III_n' & \leq \left\|\hat K^* - K^*\right\|\left\|\frac{1}{n}\sum_{i=1}^nW_i(U_i+\langle Z_i,\varphi_0\rangle)\right\|\left\|(\alpha_n I + K^*K)^{-1}(K^*K)^\gamma\psi\right\| \\
			& \lesssim_P n^{-1}\alpha_n^{\gamma\wedge 1 -1} \\
			& = o_P(\pi_n^{-1}).
		\end{aligned}
	\end{equation*}
	Next, decompose
	\begin{equation*}
		\begin{aligned}
			IV_n' & = \left\langle(\alpha_nI + \hat K^*\hat K)^{-1}\hat K^*\hat K\varphi_1 - (\alpha_nI + K^*K)^{-1}K^*K\varphi_1,\mu_1\right\rangle \\
			& = \left\langle \alpha_n(\alpha_nI + \hat K^*\hat K)^{-1}\left[\hat K^*\hat K - K^*K\right](\alpha_nI + K^*K)^{-1}\varphi_1,\mu_1\right\rangle \\
			&  = \left\langle(\alpha_nI + \hat K^*\hat K)^{-1}\hat K^*(\hat K - K)b_n,\mu_1\right\rangle \\
			& \qquad + \left\langle(\alpha_nI + \hat K^*\hat K)^{-1}(\hat K^* - K^*)Kb_n,\mu_1\right\rangle \\
			& \triangleq IV_n^a + IV_n^b + IV_n^c + IV_n^d + IV_n^e
		\end{aligned}
	\end{equation*}	
	with 
	\begin{equation*}
		\begin{aligned}
			IV_n^a & = \left\langle \left\{(\alpha_n I + \hat K^*\hat K)^{-1} - (\alpha_n I + K^*K)^{-1}\right\}\hat K^*(\hat K - K)b_n,\mu_1 \right\rangle \\
			IV_n^b & = \left\langle \left\{(\alpha_n I + \hat K^*\hat K)^{-1} - (\alpha_n I + K^*K)^{-1}\right\}(\hat K^* - K^*)Kb_n,\mu_1 \right\rangle \\
			IV_n^c & = \left\langle (\alpha_n I + K^*K)^{-1}K^*(\hat K - K)b_n,\mu_1 \right\rangle \\
			IV_n^d & = \left\langle (\alpha_n I + K^*K)^{-1}(\hat K^* - K^*)(\hat K - K)b_n,\mu_1 \right\rangle\\
			IV_n^e & = \left\langle (\alpha_n I + K^*K)^{-1}(\hat K^* - K^*)Kb_n,\mu_1 \right\rangle.
		\end{aligned}
	\end{equation*}
	We bound the last three terms by the Cauchy-Schwartz inequality
	\begin{equation*}
		\begin{aligned}
			IV_n^c & \leq \left\|\hat K - K\right\| \|b_n\| \left\|K(\alpha_n I + K^*K)^{-1}\mu_1\right\| \lesssim_P \frac{\alpha_n^{\frac{\beta}{2}\wedge1+\gamma\wedge \frac{1}{2}}}{\sqrt{n\alpha_n}} \\
			IV_n^d & \leq \left\|\hat K^* - K^*\right\|\left\|\hat K - K\right\|\|b_n\|\left\|(\alpha_n I + K^*K)^{-1}\mu_1\right\| \lesssim_P\frac{\alpha_n^{\frac{\beta}{2}\wedge 1 + \gamma\wedge 1}}{n\alpha_n} \\
			IV_n^e & \leq \left\|\hat K^* - K^*\right\|\|Kb_n\|\left\|(\alpha_n I + K^*K)^{-1}\mu_1\right\| \lesssim_P \frac{\alpha_n^{\frac{\beta}{2}\wedge \frac{1}{2} + \gamma\wedge 1}}{\sqrt{n\alpha_n}}.
		\end{aligned}
	\end{equation*}
	Next, for the first two terms, by the Cauchy-Schwartz inequality, we have
	\begin{equation*}
		\begin{aligned}
			IV_n^a &\leq \left\|\hat K - K\right\|^2\left\|(\alpha_nI + \hat K^*\hat K)^{-1}\hat K^*\right\|\|b_n\|\left\|K(\alpha_n I + K^*K)^{-1}\mu_1\right\| \\
			&\quad + \left\|\hat K^* - K^*\right\| \left\|\hat K(\alpha_n I + \hat K^*\hat K)^{-1}\hat K^*\right\| \left\|\hat K - K\right\| \|b_n\| \left\|(\alpha_n I+  K^*K)^{-1}\mu_1\right\|\\
			& \lesssim_P \frac{\alpha_n^{\frac{\beta}{2}\wedge 1 + \gamma\wedge \frac{1}{2}}}{n\alpha_n} + \frac{\alpha_n^{\frac{\beta}{2}\wedge1 + \gamma\wedge 1}}{n\alpha_n} \lesssim_P \frac{\alpha_n^{\frac{\beta}{2}\wedge 1 + \gamma\wedge \frac{1}{2}}}{n\alpha_n}
		\end{aligned}
	\end{equation*}
	and
	\begin{equation*}
		\begin{aligned}
			IV_n^b &\leq \left\|\hat K - K\right\|\left\|(\alpha_n I + \hat K^*\hat K)^{-1}\right\|\left\|\hat K^* - K^*\right\|\|Kb_n\|\left\|K(\alpha_n I + K^*K)^{-1}\mu_1\right\| \\
			& \quad + \left\|\hat K^* - K^*\right\|^2\left\|\hat K(\alpha_n I + \hat K^*\hat K)^{-1}\right\|\|Kb_n\|\left\|(\alpha_n I + K^*K)^{-1}\mu_1\right\| \\
			& \lesssim_P \frac{\alpha_n^{\frac{\beta}{2}\wedge\frac{1}{2}+\gamma\wedge\frac{1}{2}}}{n\alpha_n} + \frac{\alpha_n^{\frac{\beta}{2}\wedge\frac{1}{2} + \gamma\wedge 1}}{n\alpha_n} \lesssim_P \frac{\alpha_n^{\frac{\beta}{2}\wedge\frac{1}{2}+\gamma\wedge\frac{1}{2}}}{n\alpha_n}.
		\end{aligned}
	\end{equation*}
	Lastly,
	\begin{equation*}
		\pi_n\langle b_n,\mu_1\rangle \lesssim \pi_n \|(K^*K)^{\gamma}b_n\| \lesssim \pi_n\alpha_n^{(\gamma+\beta/2)\wedge 1}.
	\end{equation*}
	This completes the proof.
\end{proof}

\section{Partial Identification}\label{sec:pi}
In this section, we discuss how our results can be applied to partial identification, drawing on ideas from \cite{freyberger2017completeness} and \cite{santos2012inference}. This section is intended primarily for illustrative purposes and a detailed exploration of the partial identification approach is left for future research.

Define the $L_\infty$-diameter of the identified set as
\begin{equation*}
	\left|I_0\right| \triangleq \sup_{\phi_1,\phi_2\in I_0}\|\phi_1 - \phi_2\|_\infty.
\end{equation*}
Suppose that both $\varphi$ and $\varphi_1$ belong to the Lipschitz smoothness class
\begin{equation*}
	\mathscr{F} \triangleq \left\{\phi\in L_\infty([0,1]^p):\;\|\phi\|_s \triangleq \|\phi\|_\infty + \sup_{z_1\ne z_2}\frac{|\phi(z_1) - \phi(z_2)|}{\|z_1-z_2\|}\leq C \right\}.
\end{equation*}

If the best approximation $\varphi_1$ and the diameter of the identified set $|I_0|$ are known, then $\varphi_1\pm|I_0|$ is a valid identified set for the structural function $\varphi$. Our results establish that we can estimate the best approximation consistently. The remaining task is to infer the diameter of the identified set.

To this end, for a fixed $\varepsilon>0$, we consider the following hypotheses:
\begin{equation*}
	H_0:\;\left|I_0\right|\geq \varepsilon\qquad \textrm{vs.} \qquad H_1:\;\left|I_0\right|<\varepsilon
\end{equation*}
following  the approach in \cite{freyberger2017completeness} to construct {a test statistic}. Under $H_0$, there exist functions $\phi_1,\phi_2\in I_0$ such that $\|\phi_1-\phi_2\|_\infty\geq \varepsilon$. For $\phi=\phi_1-\phi_2$, we have  $K\phi = 0$, $\|\phi\|_s\leq 2C$, and $\|\phi\|_\infty\geq \varepsilon$. This motivates the following {statistic}
\begin{equation*}
	T_\varepsilon\triangleq \inf_{\phi:\|\phi\|_s\leq 2C, \|\phi\|_\infty\geq \varepsilon}\|K\phi\|^2 = \inf_{\phi\in\mathscr{F}_\varepsilon, \|\phi\|_\infty=1}\|K\phi\|^2.
\end{equation*}
where $\mathscr{F}_\varepsilon = \left\{\phi\in L_\infty([0,1]^p):\; \|\phi\|_s \leq 2C/\varepsilon \right\}$. Clearly, under $H_0$, we have $T_\varepsilon=0$. Define the sample counterpart as
\begin{equation*}
	\hat T_\varepsilon = \inf_{\phi\in\mathscr{F}_\varepsilon:\; \|\phi\|_\infty=1}\|\hat K\phi\|^2,
\end{equation*}
where $\hat K$ is a consistent estimator of $K$. 

For NPIV regression, suppose that the density $\hat f_{ZW}$ is estimated using a series estimator, and the { statistic} $\hat T_\varepsilon$ is computed over a $J$-dimensional sieve. Theorem 2 in \cite{freyberger2017completeness} shows that
\begin{equation*}
	\hat\varepsilon = \sup\left\{\varepsilon\in[0,\bar C]:\; n\hat T_\varepsilon \leq q_{1-\alpha}\right\}
\end{equation*}
consistently estimates the upper bound on $|I_0|$, where $\bar C$ is the largest $\varepsilon$ for which $\{\phi:\; \|\phi\|_s\leq 2C,\|\phi\|_\infty\geq \varepsilon \}\ne \emptyset$ and $q_{1-\alpha}$ is a critical value.

Combining this result with our result on the $L_\infty$ consistency for the best approximation, the interval $[\hat\varphi-\hat\varepsilon,\hat\varphi+\hat\varepsilon]$ serves as a valid set estimator for $\varphi$.\footnote{We thank the anonymous referee for highlighting this connection.}

\section{Extreme Nonidentification}\label{sec:extreme}
In this section, we derive an approximation of the large sample distribution of the Tikhonov-regularized estimators in cases of extreme nonidentification. Remarkably, we show that the asymptotic distribution is a weighted sum of independent chi-squared random variables. This result serves as a foundation for Section~\ref{sec:flir}, where we analyze a transition between chi-squared and Gaussian limits in intermediate cases.\footnote{Extreme nonidentification also relates to the weak instruments problem.}

\subsection{High-dimensional Regressions}
In high-dimensional regressions, the strength of identification is determined by the covariance operator of $Z$ and $W$. In cases of extreme nonidentification, this operator is degenerate, leading to the following result.
\begin{theorem}\label{thm:u_statistic}
	Suppose that Assumption~\ref{as:data4} holds, $\E[\langle Z,\delta\rangle W]=0,\;\forall \delta\in \mathcal{E}$, and $\alpha_nn\to\infty$. Then
	\begin{equation*}
		\alpha_nn(\hat \varphi - \varphi_1)\xrightarrow{d} \E\left[\|W\|^2YZ\right] + J(h),
	\end{equation*}
	where $h(X,X') = \frac{1}{2}\langle W,W'\rangle(ZY' + Z'Y)$, $X'=(Y',Z',W')$ is an independent copy of $X=(Y,Z,W)$, and $J$ is a stochastic Wiener-It\^{o} integral.
\end{theorem}
The theorem states weak convergence in the topology of the Hilbert space $\mathcal{E}$, which cannot be achieved in cases of regular identification. Furthermore, it can be shown that the distribution of the inner products of $J(h)$ with $\mu\in\mathcal{E}$ is a weighted sum of chi-squared random variables. Interestingly, Theorem~\ref{thm:u_statistic} does not require that $\alpha_n\to 0$ as $n\to\infty$.

\begin{proof}[Proof of Theorem~\ref{thm:u_statistic}]
	Given that $\E[\langle Z,\delta\rangle W]=0$ for all $\delta\in\mathcal{E}$, we have $\varphi_1 = 0$. Thus, 
	\begin{equation*}
		\alpha_nn\left(\hat{\varphi} - \varphi_1\right) = \left(I + \frac{1}{\alpha_n}\hat K^*\hat K\right)^{-1}n\hat K^*\hat r.
	\end{equation*}
	Under Assumption~\ref{as:data4}
	\begin{equation*}
		\begin{aligned}
			\E\|\hat K\|^2 = \E\|\hat K - K\|^2 \leq \E\left\|\frac{1}{n}\sum_{i=1}^nZ_iW_i - \E[ZW]\right\|^2 = O\left(\frac{1}{n}\right).
		\end{aligned}
	\end{equation*}
	Thus, $\|\hat K^* \hat K\| \leq \|\hat K\|^2 = O_P\left(n^{-1}\right)$. Since $\alpha_nn\to\infty$, the continuous mapping theorem (see \cite{van2000weak}, Theorem 1.3.6) implies that
	\begin{equation*}
		\alpha_nn\left(\hat{\varphi} - \varphi_1\right) = (I + o_P(1))^{-1}n\hat K^*\hat r.
	\end{equation*}
	Applying Slutsky's theorem (see \cite{van2000weak}, Example 1.4.7), it suffices to determine the asymptotic distribution of $n\hat K^*\hat r$.
	
	Observe that
	\begin{equation*}
		\begin{aligned}
			n\hat K^*\hat r & = \frac{1}{n}\sum_{i,j=1}^n\left\langle W_i,W_j \right\rangle Z_iY_j \\
			& = \frac{1}{n}\sum_{i=1}^n \|W_i\|^2Z_iY_i + \frac{1}{n}\sum_{i\ne j}\left\langle W_i,W_j \right\rangle Z_iY_j. \\
		\end{aligned}
	\end{equation*}
	Under Assumption~\ref{as:data4}, by the Mourier law of large numbers
	\begin{equation*}
		\frac{1}{n}\sum_{i=1}^n \|W_i\|^2Z_iY_i \xrightarrow{a.s.} \E\left[\|W\|^2ZY\right].
	\end{equation*}
	
	Since $\E[\langle Z,\delta\rangle W]=0,\forall \delta\in\mathcal{E}$, the second term forms a Hilbert space-valued degenerate {$U$-statistic}
	\begin{equation*}
		\begin{aligned}
			n\mathbf{U}_n & \triangleq  \frac{1}{n}\sum_{i\ne j}\left\langle W_i,W_j \right\rangle Z_iY_j \\
			& = \frac{2}{n}\sum_{i<j}\frac{Z_iY_j + Z_jY_i}{2}\left\langle W_i,W_j \right\rangle.
		\end{aligned}
	\end{equation*}
	Under the Assumption~\ref{as:data4}, the Borovskich CLT, see Online Supplementary Material, Theorem~\ref{thm:borovskikh}, gives
	\begin{equation*}
		n\mathbf{U}_n \xrightarrow{d} J(h),
	\end{equation*}
	where $J(h)=\iint_{\mathcal{X}\times\mathcal{X}} h(x_1,x_2)\mathbb{W}(\dx x_1)\mathbb{W}(\dx x_2)$ is a stochastic Wiener-It\^{o} integral, $\mathbb{W}$ is a Gaussian random measure on $\mathcal{X}$, $h(X,X') = \frac{ZY' + Z'Y}{2}\langle W,W'\rangle$, and $X'=(Y',Z',W')$ is an independent copy of $X=(Y,Z,W)$.
\end{proof}

\subsection{Nonparametric IV Regression}
In nonparametric IV regression, the  strength of identification is described by the conditional expectation operator. In the extreme nonidentified case,
\begin{equation*}
	\E[\phi(Z)|W]=0,\qquad \forall \phi\in L_{2,0}(Z),
\end{equation*}
where $L_{2,0}(Z)=\left\{\phi\in L_2(Z):\; \E\phi(Z)=0 \right\}$, and $K$ is a degenerate conditional expectation operator. Consider the operator $T:\phi\mapsto \E_{X}[\phi(X)h(X,X')]$ on $L_2(X)$, where $\E_X$ denotes expectation with respect to $X=(Y,Z,W)$ only, $X'$ is an independent copy of $X$, and
\begin{equation*}
	h(x,x')=\frac{1}{2}\left\{yP_0\mu(z') + y'P_0\mu(z)\right\}h_w^{-q}\bar K\left(h_w^{-1}(w-w')\right).
\end{equation*}
Here, $P_0$ is the projection operator on $L_{2,0}$ and $\bar K(v)=\int K_w(v-u)K_w(u)\dx u$ is the convolution kernel. We assume the following mild conditions on the distribution of the data:

\begin{assumption}\label{as:degenerate_npiv}
	(i) $(Y_i,Z_i,W_i)_{i=1}^n$ is an i.i.d. sample of $(Y,Z,W)$; (ii) $\E\left[|Y||Z\right]<\infty$, $\E[|Y|^2|W]<\infty$ a.s.; (iii) $K_j\in L_1\cap L_2,j\in\{z,w\}$ and $K_w$ is a symmetric and bounded function; (iv) $f_Z\in L_\infty$.
\end{assumption}

Let $h_z$ and $h_w$ be the bandwidth parameters for smoothing over $Z$ and $W$, respectively. The following result holds:
\begin{theorem}\label{thm:degenerate_npiv}
	Suppose that Assumption~\ref{as:degenerate_npiv} holds, $\E[\phi(Z)|W]=0,\forall \phi\in L_{2,0}(Z)$, and $n\alpha_nh_z^p\to \infty$ with $h_w$ fixed. Then for every $\mu\in L_2([0,1]^p)$,
	\begin{equation*}
		\alpha_nn \langle \hat \varphi - \varphi_1,\mu\rangle \xrightarrow{d} \E\left[YP_0\mu(Z)\right]h_w^{-q}\bar K(0) + \sum_{j=1}^\infty\lambda_j(\chi_{j}^2 - 1),
	\end{equation*}
	where $(\chi^2_{j})_{j\geq 1}$ are independent chi-squared random variables with 1 degree of freedom and $(\lambda_j)_{j\geq 1}$ are eigenvalues of $T$.
\end{theorem}
\begin{proof}[Proof of Theorem~\ref{thm:degenerate_npiv}]
	Since $\E[\phi(Z)|W]=0,\forall \phi\in L_{2,0}(Z)$, we find that $\varphi_1=0$. Additionally, the adjoint operator of $K$ is $P_0K^*$, where $P_0$ is the orthogonal projection on $L_{2,0}(Z)$. Thus,
	\begin{equation*}
		\begin{aligned}
			\alpha_nn(\hat{\varphi} - \varphi_1) = \left(I + \frac{1}{\alpha_n}P_0\hat K^*\hat K\right)^{-1}nP_0\hat K^*\hat r,
		\end{aligned}
	\end{equation*}
	where $\hat P_0$ is the estimator of $P_0$. Since $\E[\phi(Z)|W]=0$ for all $\phi\in L_{2,0}(Z)$, under Assumption~\ref{as:degenerate_npiv} (i),
	\begin{equation*}
		\begin{aligned}
			\E\left\|P_0\hat K^*\hat K\right\| & \leq \E\|P_0\hat K\|^2  \leq \E\| P_0\hat f_{ZW}\|^2 \\
			& = \E\left\|\frac{1}{nh_z^ph_w^q}\sum_{i=1}^nP_0K_z\left(h_z^{-1}(Z_i - z)\right)K_w\left(h_z^{-1}(W_i - w)\right)\right\|^2 \\
			& \leq \frac{1}{nh_z^{2p}h_w^{2q}}\E\left\|P_0K_z\left(h_z^{-1}(Z_i - z)\right)K_w\left(h_z^{-1}(W_i - w)\right)\right\|^2 \\
			& = \frac{1}{nh_z^ph_w^q}\|P_0K_z\|\|K_w\| = O\left(\frac{1}{nh_z^p}\right).
		\end{aligned}
	\end{equation*}
	Therefore, as $n\alpha_nh_z^p\to \infty$, we have $\frac{1}{\alpha_n}\left\|\hat P_0\hat K^*\hat K\right\| = o_P(1)$. By the continuous mapping and Slutsky's theorems, it then suffices to determine the asymptotic distribution of
	\begin{equation*}
		nP_0\hat K^*\hat r = \frac{1}{nh_z^ph_w^q}\sum_{i,j}Y_iP_0K_z\left(h_z^{-1}(Z_j-z)\right)\bar K\left(h_w^{-1}(W_i-W_j)\right).
	\end{equation*}
	For every $\mu\in L_2([0,1]^p)$, we have
	\begin{equation*}
		\begin{aligned}
			\left\langle nP_0\hat K^*\hat r,\mu\right\rangle & = \left\langle n\hat K^*\hat r,P_0\mu\right\rangle \\
			& \triangleq \zeta_n + \mathbf{U}_n + R_n,
		\end{aligned}
	\end{equation*}
	where
	\begin{equation*}
		\begin{aligned}
			\zeta_n & = \frac{1}{n}\sum_{i=1}^nY_iP_0\mu(Z_i)h_w^{-q}\bar K(0), \\
			\mathbf{U}_n & = \frac{2}{n}\sum_{i<j}\frac{1}{2}\left\{Y_iP_0\mu(Z_j) + Y_jP_0\mu(Z_i)\right\}h_w^{-q}\bar K\left(h_w^{-1}(W_i-W_j)\right), \\
			R_n & = \frac{1}{nh_w^q}\sum_{i,j=1}^n Y_i\left\{[K_z\ast P_0\mu](Z_j) - P_0\mu(Z_j)\right\}\bar K\left(h_n^{-1}(W_i-W_j)\right)
		\end{aligned}
	\end{equation*}
	with $[K_z\ast P_0\mu](z) = h_n^{-p}\int K\left(h_z^{-1}(z-u)\right)P_0\mu(u)\dx u$. Under Assumption~\ref{as:degenerate_npiv}, the strong law of large numbers gives
	\begin{equation*}
		\zeta_n\xrightarrow{a.s} \E\left[YP_0\mu(Z)\right]h_w^{-q}\bar K(0).
	\end{equation*}
	Since $\E[\phi(Z)|W]=0,\forall \phi\in L_{2,0}(Z)$, $\mathbf{U}_n$ is a centered degenerate {U-statistic}. By the central limit theorem for degenerate U-statistics, see \cite{gregory1977large},
	\begin{equation*}
		\mathbf{U}_n = \frac{2}{n}\sum_{i<j}h(X_i,X_j) \xrightarrow{d} \sum_{j=1}^\infty \lambda_j(\chi^2_{j} - 1).
	\end{equation*}
	Finally, decompose $R_n = R_{1n} + R_{2n}$, where
	\begin{equation*}
		\begin{aligned}
			R_{1n} & = \frac{1}{n}\sum_{i=1}^n Y_i\left\{[K_z\ast P_0\mu](Z_i) - P_0\mu(Z_i)\right\}h_w^{-q}\bar K(0), \\
			R_{2n} & = \frac{1}{n}\sum_{i<j} Y_i\left\{[K_z\ast P_0\mu](Z_j) - P_0\mu(Z_j)\right\}h_w^{-q}\bar K\left(h_w^{-1}(W_i-W_j)\right).
		\end{aligned}
	\end{equation*}
	Note that
	\begin{equation*}
		\begin{aligned}
			\E|R_{1n}| & \leq \E\left|Y\left\{[K_z\ast P_0\mu](Z) - P_0\mu(Z) \right\}\right| h_w^{-q}\bar K(0) \\
			&\lesssim \int \left|[K_z\ast P_0\mu](z) - P_0\mu(z)\right| f_Z(z)\dx z  \\
			& \leq \|K_z\ast \mu - \mu\|^2\|f_Z\|^2 \\
			& = o(1),
		\end{aligned}
	\end{equation*}
	where the first two lines follow from Assumption~\ref{as:degenerate_npiv} (i)-(ii), the third by the Cauchy-Schwartz inequality and $\|P_0\|\leq 1$, and the last  by \cite{gine2015mathematical}, Proposition 4.1.1. (iii). Similarly, since $\E\left[|Y|^2|W\right]<\infty$ a.s. and $\bar K\in L_\infty$, we use the moment inequality in \cite{korolyuk2013theory}, Theorem 2.1.3, to obtain
	\begin{equation*}
		\begin{aligned}
			\E|R_{2n}|^2 & \lesssim \E\left|Y\left\{[K_z\ast P_0\mu](Z') - P_0\mu(Z')\right\}h_w^{-q}\bar K\left(h_w^{-1}(W-W')\right)\right|^2 \\
			& \lesssim \int \left|[K_z\ast P_0\mu](z) - P_0\mu(z)\right| f_Z(z)\dx z  = o(1).
		\end{aligned}
	\end{equation*}
	This completes the proof.
\end{proof}

\newpage
\setcounter{page}{1}
\setcounter{section}{0}
\setcounter{equation}{0}
\setcounter{table}{0}
\setcounter{figure}{0}
\renewcommand{\theequation}{S.\arabic{equation}}
\renewcommand\thetable{S.\arabic{table}}
\renewcommand\thefigure{S.\arabic{figure}}
\renewcommand\thesection{S.\arabic{section}}
\renewcommand\thesubsection{S.\arabic{section}.\arabic{subsection}}
\renewcommand\thepage{Supplementary Material - \arabic{page}}

\begin{center}
	{\LARGE\textbf{SUPPLEMENTARY MATERIAL}}	
\end{center}
\bigskip

\section{Generalized Inverse}\label{app:gi}
In this section, we collect some facts about the generalized inverse operator from operator theory; see also \cite{carrasco2007linear} for a comprehensive review of different aspects of the theory of ill-posed inverse models in econometrics. Let $\varphi\in\mathcal{E}$ be a structural parameter in a Hilbert space $\mathcal{E}$ and let $K:\mathcal{E}\to\mathcal{H}$ be a bounded linear operator mapping to a Hilbert {space} $\mathcal{H}$. Consider the functional equation
\begin{equation*}
	K\varphi = r.
\end{equation*}
If the operator $K$ is not one-to-one, then the structural parameter $\varphi$ is not point identified, and the identified set is a closed linear manifold described as $\Phi^\mathrm{ID}=\varphi+\mathcal{N}(K)$, where $\mathcal{N}(K)=\{\phi:K\phi = 0\}$ is the null space of $K$; see Figure~\ref{fig:geometry}. The following result offers equivalent characterizations of the identified set; see \cite{groetsch1977generalized}, Theorem 3.1.1 for a formal proof.
\begin{proposition}\label{prop:identified_set}
	The identified set $I_0$ is characterized as a set of solutions to
	\begin{itemize}
		\item[(i)] the least-squares problem: $\min_{\phi\in\mathcal{E}}\|K\phi - r\|$;
		\item[(ii)] the normal equations: $K^*K\phi = K^*r$, where $K^*$ is the adjoint operator of $K$.
	\end{itemize}
\end{proposition}

The generalized inverse is formally defined below.
\begin{definition}
	The generalized inverse of the operator $K$ is a unique linear operator $K^\dagger: \mathcal{R}(K)\oplus\mathcal{R}(K)^\perp \to \mathcal{E}$, defined by $K^\dagger r=\varphi_1$, where $\varphi_1\in I_0$ is a unique solution to
	\begin{equation}\label{eq:minimum_norm}
		\min_{\phi\in I_0}\|\phi\|.
	\end{equation}
\end{definition}
For nonidentified linear models, the generalized inverse maps $r$ to the unique minimal norm element of $I_0$. It follows from equation~(\ref{eq:minimum_norm}) that $\varphi_1$ is a projection of $0$ on the identified set. Therefore, $\varphi_1$ is the projection of the structural parameter $\varphi$ on the orthogonal complement to the null space $\mathcal{N}(K)^\perp$, see Figure~\ref{fig:geometry}, and we call $\varphi_1$ the best approximation to the structural parameter $\varphi$. The generalized inverse operator is typically a discontinuous map, as illustrated in the following proposition; see \cite{groetsch1977generalized}, pp.117-118 for more details.

\begin{figure}
	\centering
	\begin{tikzpicture}[font=\sffamily, >=latex]
\clip (-1.3,-1) rectangle (9,8);

\draw (1.5,0.5) coordinate(d0min) -- (8,4) coordinate(d0max);
\node [right] at (d0min) {$\mathcal{N}(K)^\perp=\left\{\phi:\langle\phi,\psi \rangle = 0,\;\forall\psi\in \mathcal{N}(K)\right\}$};

\draw (3,6) coordinate(s0min) -- (6,1) coordinate(s0max);
\draw ($(s0min)+(1,1)$) coordinate(s1min) -- +($(s0max)-(s0min)$) coordinate(s1max);
\node [left] at (s0min) {$\mathcal{N}(K) = \left\{\phi: K\phi=0\right\}$};
\node [right] at (s1min) {$\Phi^{\mathrm{ID}} = \left\{\phi: K\phi=r\right\}$};

\coordinate (d0s0) at (intersection of d0min--d0max and s0min--s0max);
\coordinate (d0s1) at (intersection of d0min--d0max and s1min--s1max);

\node [below] at (d0s0) {0};

\end{tikzpicture}
	\caption{Fundamental subspaces of $\mathcal{E}$.}
	\label{fig:geometry}
\end{figure}

\begin{proposition}
	Suppose that the operator $K$ is compact. Then the generalized inverse $K^\dagger$ is continuous if and only if $\mathcal{R}(K)$ is finite-dimensional.
\end{proposition}
The following example illustrates this when $K$ is an integral operator on spaces of square-integrable functions.
\begin{example}
	Suppose that $K$ is an integral operator
	\begin{equation*}
		\begin{aligned}
			K: L_2 & \to L_2 \\
			\phi & \mapsto \int \phi(z)k(z,w)\dx z.
		\end{aligned}
	\end{equation*}
	Then $K$ is compact whenever the kernel function $k$ is square integrable. In this case, the generalized inverse is continuous if and only if $k$ is a degenerate kernel function
	\begin{equation*}
		k(z,w) = \sum_{j=1}^m\phi_j(z)\psi_j(w).
	\end{equation*}
\end{example}

It is worth stressing that in the NPIV model, the kernel function $k$ is typically a non-degenerate probability density function. Moreover, in econometric applications, $r$ is usually estimated from the data, so that $K^\dagger\hat r\xrightarrow{p} K^\dagger r=\varphi_1$ may not hold even when $\hat r\xrightarrow{p} r$ due to the discontinuity of $K^\dagger$.\footnote{In practice, the situation is even more complex because the operator $K$ is also estimated from the data.} In other words, we are faced with an ill-posed inverse problem. Tikhonov regularization can be understood as a method that smooths out the discontinuities of the generalized inverse $(K^*K)^\dagger$.\footnote{By Proposition~\ref{prop:identified_set}, solving $K\varphi = r$ is equivalent to solving $K^*K\varphi = K^*r$. The latter is more attractive to work with because the spectral theory of self-adjoint operators in Hilbert spaces applies to $K^*K$.}

\section{Degenerate U-statistics in Hilbert Spaces}\label{app:wiener}
\subsection{Wiener-It\^{o} Integral}
This section reviews key results on the asymptotic distribution of degenerate U-statistics in Hilbert spaces. Let $(\mathcal{X},\Sigma,\mu)$ be a measure space, and let $H$ be a separable Hilbert space. We denote by $L_2(\mathcal{X}^m,H)$ the space of functions $f:\mathcal{X}^m\to H$ satisfying $\E\|f(X_1,\dots,X_m)\|^2<\infty$. A stochastic process $\left\{\mathbb{W}(A),A\in\Sigma_\mu\right\}$, indexed by the $\sigma$-field $\Sigma_\mu=\{A\in\Sigma:\; \mu(A)<\infty\}$ is called a \textit{Gaussian random measure} if:
\begin{enumerate}
	\item For all $A\in\Sigma_\mu$,
	\begin{equation*}
		\mathbb{W}(A) \sim N(0,\mu(A)).
	\end{equation*}
	\item For any collection of disjoint sets $(A_k)_{k=1}^K$ in $\Sigma_\mu$, the random variables $\mathbb{W}(A_k),k=1,\dots, K$ are independent, and
	\begin{equation*}
		\mathbb{W}\left(\bigcup_{k=1}^KA_k\right) = \sum_{k=1}^K\mathbb{W}(A_k).
	\end{equation*}
\end{enumerate}

Now, let $(A_k)_{k=1}^K$ be pairwise disjoint sets in $\Sigma_\mu$, and consider the set $S_m$ of simple functions $f\in L_2(\mathcal{X}^m,H)$ of the form
\begin{equation*}
	f(x_1,\dots,x_m) = \sum_{i_1,\dots,i_m=1}^Kc_{i_1,\dots,i_m}\one_{A_{i_1}}(x_1)\times\dots\times \one_{A_{i_m}}(x_m),
\end{equation*}
where $c_{i_1,\dots,i_m}=0$ if any two indices $i_1,\dots,i_m$ are equal, i.e., $f$ vanishes on the diagonal. For a Gaussian random measure $\mathbb{W}$ corresponding to $P$, we define the random operator $J_m:S_m\to H$ by
\begin{equation*}
	J_m(f) = \sum_{i_1,\dots,i_m=1}^Kc_{i_1,\dots,i_m}\mathbb{W}(A_{i_1})\dots\mathbb{W}(A_{i_m}).
\end{equation*}
The operator $J_m$ has three notable properties:
\begin{enumerate}
	\item Linearity;
	\item $\E J_m(f) = 0$;
	\item Isometry: $\E\langle J_m(f),J_m(g)\rangle_H = \langle f,g \rangle_{L_2(\mathcal{X}^m,H)}$.
\end{enumerate}
Since $S_m$ is dense in $L_2(\mathcal{X}^m,H)$, $J_m$ can be extended to a continuous linear isometry on $L_2(\mathcal{X}^m,H)$, known as the Wiener-It\^{o} integral.

\begin{example}
	Let $(B_t)_{t\geq 0}$ denote a real-valued Brownian motion. For any interval $(t,s]\subset [0,\infty)$, define $\mathbb{W}((t,s]) = B_s - B_t$, which is a Gaussian random measure (with $\mu$ as the Lebesgue measure). The Wiener-It\^{o} integral $J:L_2([0,\infty),\dx t)\to \R$ is then given by $J(f) = \int f(t)\dx B_t$.
\end{example}

\subsection{Central Limit Theorem}
Consider a probability space $(\mathcal{X},\Sigma,P)$, where $\mathcal{X}$ is a separable metric space and $\Sigma$ is a Borel $\sigma$-algebra. Let $(X_i)_{i=1}^n$ be i.i.d. random variables taking values in $(\mathcal{X},\Sigma,P)$. Define a symmetric function $h:\mathcal{X}\times\mathcal{X}\to H$, where $H$ is a separable Hilbert space. The $H$-valued {$U$-statistic} of degree $2$ is given by
\begin{equation*}
	\mathbf{U}_n = \frac{2}{n(n-1)}\sum_{1\leq i<j\leq n}h(X_i,X_j).
\end{equation*}
The {$U$-statistic} is called degenerate if $\E h(x_1,X_2)=0$. The following theorem provides the limiting distribution of degenerate $H$-valued $U$-statistics; see \cite{korolyuk2013theory}, Theorem 4.10.2 for a detailed proof.

\begin{theorem}\label{thm:borovskikh}
	Suppose that $\mathbf{U}_n$ is a degenerate {$U$-statistic} such that $\E h(X_1,X_2) = 0$ and $\E\|h(X_1,X_2)\|^2<\infty$. Then
	\begin{equation*}
		n\mathbf{U}_n \xrightarrow{d}J(h),
	\end{equation*}
	where $J(h)=\iint_{\mathcal{X}\times\mathcal{X}} h(x_1,x_2)\mathbb{W}(\dx x_1)\mathbb{W}(\dx x_2)$ is a stochastic Wiener-It\^{o} integral, and $\mathbb{W}$ is a Gaussian random measure on $H$.
\end{theorem}

\end{document}